\def\M{\mathcal{M}}
\def\N{\mathcal{N}}
\def\P{\mathbb{X}}
\def\Q{\mathbb{Y}}
\def\sP{\mathscr{P}}
\def\sB{\mathscr{B}}
\def\P{\mathbb{P}}
\def\Q{\mathbb{Q}}
\def\T{\mathcal{T}}
\def\d{\mathrm{d}}
\def\RSWD{\mathrm{RSWD}}
\def\RW{\mathrm{RW}}
\def\RWD{\mathrm{RWD}}
\def\Tr{\mathrm{Tr}}
\def\RR{\mathbb{R}}
\def\ADM{\mathrm{ADM}}
\newtheorem{theorem}{{\bf Theorem}}
\newtheorem{proposition}{{\bf Proposition}}
\newtheorem{remark}{\bf Remark}
\newtheorem{corollary}{\bf Corollary}
\begin{document}
\title{Multi-scale Non-Rigid Point Cloud Registration Using Robust Sliced-Wasserstein Distance via Laplace-Beltrami Eigenmap}
\author{Rongjie Lai,  Hongkai Zhao  \\
Department of mathematics, UC Irvine \\
   rongjiel, zhao@math.uci.edu
  \thanks{The research of Hongkai Zhao is partially supported by ONR grant N00014-11-1-0602 and NSF grant DMS-1115698. The research of Rongjie Lai is partially supported by AMS-Simons travel grants.}
}

\date{}
\maketitle

\begin{abstract}
In this work, we propose computational models and algorithms for point cloud registration with non-rigid transformation. First,  point clouds sampled from manifolds originally embedded in some Euclidean space $\RR^D$ are transformed to new point clouds embedded in $\RR^n$ by Laplace-Beltrami(LB) eigenmap using the $n$ leading eigenvalues and corresponding eigenfunctions of LB operator defined intrinsically on the manifolds. The LB eigenmap are invariant under isometric transformation of the original manifolds.  
Then we design computational models and algorithms for registration of the transformed point clouds  in distribution/probability form based on the optimal transport theory which provides both generality and flexibility to handle general point clouds setting.  Our methods use robust sliced-Wasserstein distance, which is as the average of projected Wasserstein distance along different directions, and incorporate a rigid transformation to handle ambiguities introduced by the Laplace-Beltrami eigenmap. By going from smaller $n$, which provides a quick and robust registration (based on coarse scale features) as well as a good initial guess for finer scale registration, to a larger $n$, our method provides an efficient, robust and accurate approach for multi-scale non-rigid point cloud registration. 
%In this work, we proposed a novel method for point clouds registration based on iterative sliced-wasserstein distance in Laplace-Beltrami eigenmap space.  
\end{abstract}

\section{Introduction}
%The advance of sensor and data collection techniques have led to "data deluge"  in diverse fields such as climate studies, cosmology, medicine, the Internet, data science and engineering. In order for data scientists to make sense of this massive amount of data, new computational tools are needed that permit them to analyze their data and make decisions. 
%Among various data analysis tasks such as clustering/classification, local and global pattern analysis etc, point clouds registration is a fundamental problem, which ideally should quantitatively measure global differences and provide guidance to pinpoint these differences locally. In practice, meaningful data usually has certain coherent structure, which is commonly represented as a collection of points in $\mathbb{R}^D$. These coherent structure motivates us to model point clouds as set of points sampled from certain $d-$dimensional Riemannian manifold $\M$ embedded in $\mathbb{R}^D$ with induced Riemannian metric. 

%On the other hand, point clouds are typically represented as embedding manifolds in $\mathbb{R}^3$, where it is well known that intrinsically identical surfaces can have significantly different representations due to actions such as translation, rotation, and more general isometric transformations.  

Geometric modeling and analysis of surfaces and manifolds,  such as  shape comparison, classification, registration and manifold learning, 
has important applications in many fields including computer vision, computer graphics, medical imaging and data analysis \cite{Cootes:1995,Funkhouser:03,Thompson:03,Gu:2004,lin2008riemannian}. Unlike images, which typically have a canonical form of representation as functions defined on a uniform grid in a rectangular domain, surfaces and manifolds in 3D and higher are with more complicated geometrical structures and do not have a canonical or natural form of representation or global parametrization.                     
Moreover, they are typically represented as embedding manifolds in $\mathbb{R}^D$, where it is well known that intrinsically identical manifolds can have significantly different representations due to actions such as translation, rotation, and more general isometric transformations.  These intrinsic mathematical difficulties make geometric modeling and analysis in practice much more challenging than processing 2D images. One important task in practice is to construct distinctive and robust intrinsic features or representations, local and/or global, that can be used to characterize, analyze, compare and classify shapes, surfaces and manifolds in general. 
%of surface signatures to intrinsically characterize surfaces and develop a metric that measures \emph{intrinsic} differences among surfaces. 

There have been a lot of study and proposed methods for shape analysis based on intrinsic geometry.  In early works, feature-based methods were  developed in computer vision and graphics to compare surfaces in an intrinsic fashion. Various features were proposed such as shape contexts, shape distributions, shape inner distance, conformal factor ~\cite{Osada:02,Belongie:02,Elad:2003,Ling:07,Tangelder:MTP2008,Ben-Chen:PEWSR2008} to characterize various aspects of surface geometry. These features are usually application specific and do not render a  metric structure in shape space. The shape-space approach overcomes this difficulty by introducing metric structures on the space of all surfaces, where the distance between two surfaces can be measured by the metric structure introduced for the shape space. For instance, the Teichm$\ddot{u}$ller 
space, geodesic spectra and the computation of Teichm$\ddot{u}$ller coordinates is discussed in~\cite{Luo:JDG1998,Jin:2007ACMSSPM,Jin:2009}. However, important local features are lost in this approach because each surface is viewed as 
a point in the shape space. More recently, another class of approaches were introduced based on the metric geometry~\cite{Burago:GSM2001}. In this approach, each surface is treated as a metric space and surfaces are compared according to the theory of metric geometry by measuring their Gromov-Hausdorff distance and Gromov-Wasserstein distance~\cite{Memoli:2005FCM,Memoli:2007SPBG, Memoli:2008CVPR,Memoli:2011ACM}. Diffusion distance~\cite{Lafon:Phdthesis,Coifman:2006ACHA} on surface was applied to compute the Gromov-Hausdorff distance  and compare non-rigid geometry based on the metric geometry \cite{Bronstein:2009IJCV}. While theoretically this class of methods can compute both local and global surface differences, the need for optimization over all possible correspondences makes it computationally challenging to conduct detailed analysis of surface structures in practice.

More recently, there have been increasing interests in using the Laplace-Beltrami (LB) eigensystem for 3D shape and surface analysis. Mathematically, the LB eigensystem provides an intrinsic and systematic characterization of the underlying geometry. Practically, LB eigensystem is very computable by well developed numerical methods. 
 A series of interesting works have been developed for surface characterization and analysis using its LB eigensystem~\cite{Reuter:06,Levy:2006IEEECSMA,Vallet:2008CGF,shen:2006SIGGRAPH,Shi:08a,Reuter:ijcv09,Shi:TMI2010,Peter:08,Sun:2009SGP, Bronstein:2010CVPR}. 
For surface registration, the LB eigen-system has been used to construct a common embedding space to measure shape differences, such as heat kernel embedding~\cite{Berard:1994} and global position signature~\cite{Rustamov:2007}. After embeddings, nonrigid (near) isometric deformations can be handled naturally due to the intrinsic properties of LB eigensystem.  However, there are a few new challenges for the embedding using LB eigensystem. First, there are ambiguities in LB eigensystem just like any eigensystem such as sign ambiguity for eigenfunctions and ambiguity due to the multiplicity of eigenvalues. Second, numerically, two eigenfunctions may switch order if their corresponding eigenvalues are too close. Both ambiguities can be modeled as an unknown rigid transformation. Third, the dimension of LB embedding space is usually high in order to capture fine features. 

%A global embedding framework was proposed by P. B\'erard et al.~\cite{Berard:1994} that uses heat kernel of the LB operator. In this framework, the distance of surfaces is defined as the Gromov-Hausdorff distance of their embeddings. However, this distance depends on the selection of the diffusion time and is not scale invariant. In practice, the Gromov-Hausdorff distance could be sensitive to noise and it is interesting to develop more robust distance measures in the embedding space.  

%Instead of using heat kernel, Rustamov~\cite{Rustamov:2007} introduced an embedding called 
%the Global Position Signature (GPS). This embedding is scale invariant, but is non-unique because of the sign-ambiguity and multiplicity in the eigen-system. To overcome this limitation, a heuristic histogram feature was introduced to measure surface differences.  

%{\color{blue} Incomplete}

%\subsubsection*{\textbf{Contribution}} 
In this paper, we propose a multi-scale nonrigid point cloud registration based on LB eigenmap.  In practice shapes, surfaces or manifolds in general can be sampled/represented in the simplest and most basic form, point clouds, in any space dimensions. For examples, these point clouds could 
come from laser scanners or vertices of triangulated surfaces in 3D modeling, or high dimensional feature vectors in machine learning.
We first transform the original point clouds to new point clouds in $\RR^n$ by LB eigenmap, which is defined in Section \ref{sec:ManifoldRegistration}, using the $n$ leading eigenvalues and corresponding eigenfunctions for LB operator defined intrinsically on the manifolds. In particular LB eigenmap can remove isometric variance in the original point clouds. 
Hence the original point cloud registration problem becomes a registration problem for the transformed point clouds. We use optimal transportation theory to model the registration or mapping between point clouds in a distribution sense with a natural probability interpretation. This formulation has both generality and flexibility to deal with point clouds to incorporate uncertainty, prior knowledge and other information.
 By incorporating an unknown rigid transformation in the non-convex optimization problem, one can overcome possible ambiguities of LB eigensystem by optimizing the {\em robust Wasserstein distance (RWD)} defined in section~\ref{sec:ManifoldRegistration}. To overcome the computation complexity of RWD distance, we further propose a new distance, called {\em robust sliced-Wasserstein distance (RSWD)}, defined in the new embedding space by LB eigenmap. Using RSWD, which is the average of projected Wasserstein distance along different directions, significantly reduces the computation cost. Moreover, we show that both RWD and RSW provide a metric in the LB embedding space which naturally quotient out all isometric transformations in the original embedding space. Meanwhile, we design efficient algorithms to solve the optimization problem. Monotonicity of the algorithm can be guaranteed although the proposed optimization problem is non-convex. Finally, By going from smaller $n$, which provides a quick and robust registration based on large scale features as well as a good initial guess for finer scale registration, to a larger $n$, our method provides an efficient, robust and accurate  multi-scale non-rigid point cloud or manifold registration.

%{\color{blue} Incomplete}

The rest of the paper is organized as follows. In Section~\ref{sec:mathbackground}, we briefly review the mathematical background of LB eigensytem defined on Riemannian manifolds and discuss a few existing numerical methods to solve the LB eigenvalue problem on point clouds.
After that,
in Section~\ref{sec:ManifoldRegistration}, we model the registration problem as a non-convex optimization problem based on RWD in the new embedding space defined by LB eigenmap which can handle the ambiguities inherited in the LB eigensystem.
Motivated by the optimization problem in Section~\ref{sec:ManifoldRegistration}, we propose RSWD distance in Section~\ref{sec:RSWdistance}, this new distance has advantages for being both theoretically sound and computationally efficient. In the end, a multi-scale registration process which can further improve both efficiency and robustness is developed based on the multi-scale nature of the LB eigenfunctions. 
To demonstrate the effectiveness of the proposed models and algorithms, several computational examples are illustrated in Section~\ref{sec:NumericalResults}. Finally, conclusions are made in Section~\ref{sec:conclusion}.

\section{Laplace-Beltrami Eigensystem on Riemannian Manifolds}
\label{sec:mathbackground}
In this section, we first briefly present mathematical background of Laplace-Beltrami (LB) eigen-geometry. After that, existing numerical methods of computing LB eigensystem on triangulated surfaces or point clouds will be discussed. 

%------------------------------------------------------------------------
\subsection{Mathematical background}
\label{subsec:LBeigs}
Let $(\M,g)$ be a closed $d-$dimensional Riemannian manifold. For any smooth function
$\phi \in C^{\infty}(\M)$, the LB operator in a local coordinate system
$\{(x^1,x^2,\cdots,x^d)\}$ is defined as the following coordinate invariant form~\cite{Chavel:1984,Schoen:1994,Jost:2001}: 
\begin{eqnarray} \Delta_{(\M,g)}
\phi=\frac{1}{\sqrt{G}}\sum_{i=1}^{d}\frac{\partial}{\partial
x_i}(\sqrt{G}\sum_{j=1}^{d}g^{ij}\frac{\partial \phi}{\partial x_j})
\end{eqnarray}  
where $(g^{ij})$ is the inverse matrix of $g = (g_{ij})$ and
$G=\det(g_{ij})$. In the rest of the paper, we are interested in $\M$ with dimension $d \geq 2$.

The LB operator is self-adjoint and elliptic, so its
spectrum is discrete. We denote the eigenvalues of $\Delta_{\M}$
as $0=\lambda_0<\lambda_1<\lambda_2<\cdots$ and the corresponding
eigenfunctions as $\phi_0, \phi_1,\phi_2,\cdots$ such that 
\begin{eqnarray} 
\Delta_{(\M,g)} \phi_k=-\lambda_k\phi_k, \quad \& \quad  \int_{\M} \phi_k(x) \phi_k(x) \sqrt{G} \d x = 1, \quad k=0,1,2,\cdots.
\label{eqn:lb_closesurf}
\end{eqnarray}  
The set of LB eigenfunctions $\{\phi_n\}_{n=0}^\infty$ forms an orthonormal basis of the space of $L^2$ functions on $\M$. The set 
$\{\lambda_n,\phi_n\}_{n=0}^\infty$ is called an LB eigen-system of $(\M,g)$.

Due to the intrinsic definition of LB operator $\Delta_{\M}$ , the induced LB eigen-system
$\{\lambda_i, \phi_i\}_{i=0}^ \infty$ is also completely intrinsic to the manifold geometry. 
In particularly, the LB eigen-system is invariant under isometric transformations that are rigid or nonrigid. In practice, scale invariant is also desirable. The following scaling property of LB eigen-system leads to a scale invariant embedding which will be introduced in the next section. 
\begin{proposition}
\label{prop:scaleformula}
Let $c$ be a positive constant, $\{\lambda_i,\phi_i\}$ be the eigensystem of
$(\M,g)$, and $\{\tilde{\lambda}_i,\tilde{\phi}_i\}$ be the eigensystem
of $(\M,cg)$, then
\begin{eqnarray} \lambda_i=c\cdot\tilde{\lambda_i} ~~\mbox{and}~~
\phi_i=c^{d/4}\cdot\tilde{\phi}_i \label{formula:scale}
\end{eqnarray}  
\end{proposition}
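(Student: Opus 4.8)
The plan is to first determine how the Laplace–Beltrami operator itself transforms under the conformal rescaling $g\mapsto cg$, and then to read off both the eigenvalue formula and the eigenfunction formula, the latter using the normalization in~\eqref{eqn:lb_closesurf}. Everything is local power-counting in a coordinate chart, which suffices because the expression for $\Delta_{(\M,g)}$ is coordinate-invariant.

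First I would track the scaling of each ingredient in the coordinate formula. Replacing $g_{ij}$ by $cg_{ij}$ multiplies the inverse metric $g^{ij}$ by $c^{-1}$ and the determinant $G=\det(g_{ij})$ by $c^{d}$, hence $\sqrt{G}$ by $c^{d/2}$. Substituting into the definition of the operator,
\begin{eqnarray*}
\Delta_{(\M,cg)}\phi=\frac{1}{c^{d/2}\sqrt{G}}\sum_{i=1}^{d}\frac{\partial}{\partial x_i}\left(c^{d/2}\sqrt{G}\sum_{j=1}^{d}c^{-1}g^{ij}\frac{\partial\phi}{\partial x_j}\right)=c^{-1}\,\Delta_{(\M,g)}\phi,
\end{eqnarray*}
since the constant factor $c^{d/2}$ cancels and $c^{-1}$ can be pulled out through the derivative. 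This identity $\Delta_{(\M,cg)}=c^{-1}\Delta_{(\M,g)}$ is the crux of the argument.

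From this, the eigenvalue relation is immediate: if $\Delta_{(\M,g)}\phi_i=-\lambda_i\phi_i$ then $\Delta_{(\M,cg)}\phi_i=-(\lambda_i/c)\phi_i$, so $\phi_i$ is an eigenfunction of $(\M,cg)$ with eigenvalue $\tilde\lambda_i=\lambda_i/c$, i.e. $\lambda_i=c\tilde\lambda_i$. For the eigenfunctions, because the spectrum is listed as simple ($0=\lambda_0<\lambda_1<\cdots$), the normalized eigenfunction $\tilde\phi_i$ of $(\M,cg)$ differs from $\phi_i$ only by a scalar, say $\tilde\phi_i=\alpha\phi_i$ with $\alpha>0$ once signs are fixed consistently. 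The Riemannian volume element scales as $\sqrt{\det(cg_{ij})}\,\d x=c^{d/2}\sqrt{G}\,\d x$, so imposing $\int_\M\tilde\phi_i^{2}\,c^{d/2}\sqrt{G}\,\d x=1$ and using $\int_\M\phi_i^{2}\sqrt{G}\,\d x=1$ gives $\alpha^{2}c^{d/2}=1$, whence $\alpha=c^{-d/4}$ and therefore $\phi_i=c^{d/4}\tilde\phi_i$.

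There is no serious obstacle here; the calculation is routine bookkeeping of powers of $c$. The only point I would state with care is the appeal to simplicity of the spectrum in concluding $\tilde\phi_i\parallel\phi_i$: if some eigenvalue had multiplicity larger than one, the eigenfunctions would be determined only up to an orthogonal change of basis within the eigenspace, and the pointwise identity $\phi_i=c^{d/4}\tilde\phi_i$ would then have to be understood as holding for a suitably matched choice of orthonormal eigenbases.
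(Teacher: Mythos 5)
Your proposal is correct and follows essentially the same route as the paper: compute the scaling of $g^{ij}$, $G$, and $\sqrt{G}$ to obtain $\Delta_{(\M,cg)}=c^{-1}\Delta_{(\M,g)}$, read off the eigenvalue relation, and then use the $L^2$ normalization against the rescaled volume element $c^{d/2}\sqrt{G}\,\d x$ to fix the factor $c^{-d/4}$ on the eigenfunctions. Your closing caveat about eigenvalue multiplicity (and sign) is a point the paper leaves implicit, but it does not change the argument.
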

\begin{proof} $\lambda_i=c\cdot\tilde{\lambda_i} $ can be obtained by 
\begin{equation}
\Delta_{(\M,cg)}
\phi=\frac{1}{\sqrt{c^d G}}\sum_{i=1}^{d}\frac{\partial}{\partial
x_i}(\sqrt{c^d G}\sum_{j=1}^{d}c^{-1} g^{ij}\frac{\partial \phi}{\partial x_j}) = \frac{1}{c}\Delta_{(\M,g)}
\end{equation}
Due to the normalization condition, we have
\begin{equation}
1=  \int_{\M} \phi_i(x) \phi_i(x) \sqrt{G} \d x = \int_{\M} \phi_i(x) \phi_i(x) c^{-d/2} \sqrt{c^d G} \d x = \int_{\M} \phi_i(x) c^{-d/4}\phi_i(x) c^{-d/4} \sqrt{c^d G} \d x
\end{equation}
This yields $\phi_i=c^{d/4}\cdot\tilde{\phi}_i $.
\end{proof}

The eigenfunctions of the LB
operator are natural extensions of the Fourier basis functions from a
Euclidean domain to a general manifold. A special example is the
spherical harmonics, which are the eigenfunctions of the LB operator
on the unit sphere, and proved to be useful for representation and study functions
defined on spheres.  In addition to being a useful tool for function analysis on manifolds, 
the LB eigen-system also provides an important tool for characterization and study of the intrinsic geometry of the underlying manifold.
One of the most famous examples to study surface geometry using LB
eigen-systems is Kac's question~\cite{Kac:1966}:~\textquotedblleft Can one 
hear the shape of a drum?\textquotedblright~Namely, can we determine the 
geometry of surfaces using their LB eigenvalues? However, LB eigenvalues, which are 
just a part of the LB eigen-system, can not completely determine surface geometry
\cite{Milnor:1964,Sunada:1985,Protter:1987,Gordon:1992}. More complete information is stored
in LB eigenfunctions. Uhlenbeck~\cite{Uhlenbeck:76} proved that LB eigenfunctions are generically Morse functions which provides theoretical evidence that global information of a manifold can be obtained by its LB eigenfunctions. B\'{e}rard et. al.\cite{Berard:1994} 
introduced the first theoretical result about using LB eigen-system as global 
embedding to analyze manifolds, which shows that two manifolds have the same LB eigensystem if and only if they are isometric to each other. This mathematical result suggests that the LB eigen-system provides intrinsic signatures which can determine a manifold uniquely. 
In the past few years, there have been increasing 
interests in using LB eigen-systems for 3D shape and surface analysis. 
M. Reuter~\cite{Reuter:06} proposed to use LB eigenvalues as fingerprints
to classify surfaces. Rustamov~\cite{Rustamov:2007} was one of the first to use global 
embedding obtained by LB eigen-system to study surfaces. P. Jones et al.
\cite{Peter:08} introduced a new manifold local parametrization approach 
using LB eigenfunctions. J. Sun, M. Bronstein et al.
\cite{Sun:2009SGP, Bronstein:2010CVPR} introduced intrinsic surface descriptors 
using heat kernel of surface heat equation. Several applications
of LB eigen-system in medical image analysis have been discussed in~\cite{Qiu:06,Shi:08a, Shi:08b, Lai_ISBI:09,Lai:2010CVPR}.
More recently, LB eigen-system is proposed for global understanding of point clouds data in \cite{Liang:CVPR2012}, which can be extended to study manifolds represented by point clouds in higher dimensions.

\subsection{Numerical methods for solving LB eigen-system on point clouds}
\label{subsec:computeLB}
There are several ways for solving the LB eigenvalue problem \eqref{eqn:lb_closesurf} for a given surface or a manifold sampled by point clouds. The corresponding numerical methods are designed for different representations of the given surface or manifold. If a global triangulation of a point cloud is available, differential operators on the triangulated manifold can be readily approximated \cite{Taubin:2000EUROGRAPHICS,Meyer:2003,Xu:2004}. Typically finite element method is a natural choice which turns the LB eigenvalue problem to the eigenvalue problem of a symmetric positive definite matrix \cite{Reuter:06,Qiu:06,Shi:08a}.  Another possible method, called closest point method \cite{Macdonald:2011}, is to lay down a uniform grid and use closest point relation to extend differential operators for functions defined on the surface to differential operators in the ambient Euclidean space. Finite difference method is typically used for the discretization. The resulting linear system can not be guaranteed symmetric or positive definite in general. 
There are also a few methods for solving LB eigenvalue problem directly on point clouds without a global mesh or grid. These methods are particular useful in high dimensions where a global triangulation or grid is intractable. There are two types of methods in this class. One type is kernel based methods, where the LB operator on point clouds is approximated by heat diffusion in the ambient Euclidean space~\cite{Coifman:2006ACHA} or in the tangent space~\cite{Belkin:09clp} among nearby points. In another word, the metric on the manifold is approximated by Euclidean metric locally.  This type of method does not need to approximate differential operators directly. The main advantage of such methods, e.g., diffusion map method \cite{Coifman:2006ACHA},  is their simplicity and generality. It can be applied to point clouds even not necessarily embedded in a metric space, e.g., graph Laplacian. The resulting linear system is an H-matrix, i.e., satisfying discrete maximum principle, but not symmetric.
However, these kernel based methods render low order approximation and is only limited to the approximation of the Laplace-Beltrami operator on point clouds. More recently, two new methods for
solving PDEs on point clouds are introduced in~\cite{Liang:CVPR2012,Liang2013solving,Lai:2013IPI}, where systematic methods are introduced to approximate differential operators intrinsically at each point from local approximation of the manifold and its metric by moving least squares through nearby neighbors. The resulting linear system for LB eigenvalue problem is also H-matrix but not symmetric. These methods can achieve high order accuracy and be used to approximate general differential operators on point clouds sampling manifolds with arbitrary dimensions and co-dimensions.  Moreover, the computational complexity depends mainly on the true dimension of the manifold rather than the dimension of the embedded space. However, the computational cost for local reconstruction of the manifold and its metric based on moving least squares grows quickly with the dimension of the manifold.

With these available numerical methods, the LB eigenvalues and LB eigenfunctions for a given Riemannian manifold can be obtained. In this paper, we mainly focus on the registration problem using solution from computed LB eigenvalue problem.

\section{A framework of nonrigid Manifolds Registration through LB eigenmap}
\label{sec:ManifoldRegistration}
Let $(\M, g_{\M})$ and $(\N, g_{\N})$ be two isometric d-dimensional Riemannian manifolds with induced metric from $\mathbb{R}^D$. A nonrigid isometric registration between $\M$ and $\N$ is a smooth diffeomorphism $f:\M \rightarrow \N$, such that the pullback metric satisfies $f^{*}(g_{\N}) = g_{\M}$. Due to the nonlinearity of $f$, it is not straightforward to compute $f$ in the ambient space $\RR^D$. Here, we consider to approximate $f$ through a registration between images of LB eigenmaps of $\M$ and $\N$. More specifically, 
let's write $\Phi_n = \{\lambda_k, \phi_k\}_{k=1}^{n}$ and $\Psi_n = \{\eta_k,\psi_k \}_{k=1}^{n}$ as the first $n$ nontrivial LB eigenvalues and LB eigenfunctions of $\M$ and $\N$ respectively, which can be computed using the numerical methods discussed in Section \ref{subsec:computeLB}. We consider the following LB eigenmap of $\M$ and $\N$ using their first $n$ nontrivial LB eigenvalues and LB eigenfunctions.
\begin{eqnarray}
\iota_{\M}^n:\M\rightarrow \mathbb{R}^n, & \displaystyle \quad u\mapsto p = \iota_{\M}^n(u) = \left(\frac{\phi_1(u)}{\lambda_1^{d/4}},\cdots, \frac{\phi_n(u)}{\lambda_n^{d/4}}\right) \nonumber\\
\iota_{\N}^n:\N\rightarrow \mathbb{R}^n, & \displaystyle  \quad v\mapsto q = \iota_{\N}^n(v) = \left(\frac{\psi_1(v)}{\eta_1^{d/4}},\cdots, \frac{\psi_n(v)}{\eta_n^{d/4}}\right)
\end{eqnarray}

\begin{figure}[h]
\begin{center}
\includegraphics[width=1\linewidth]{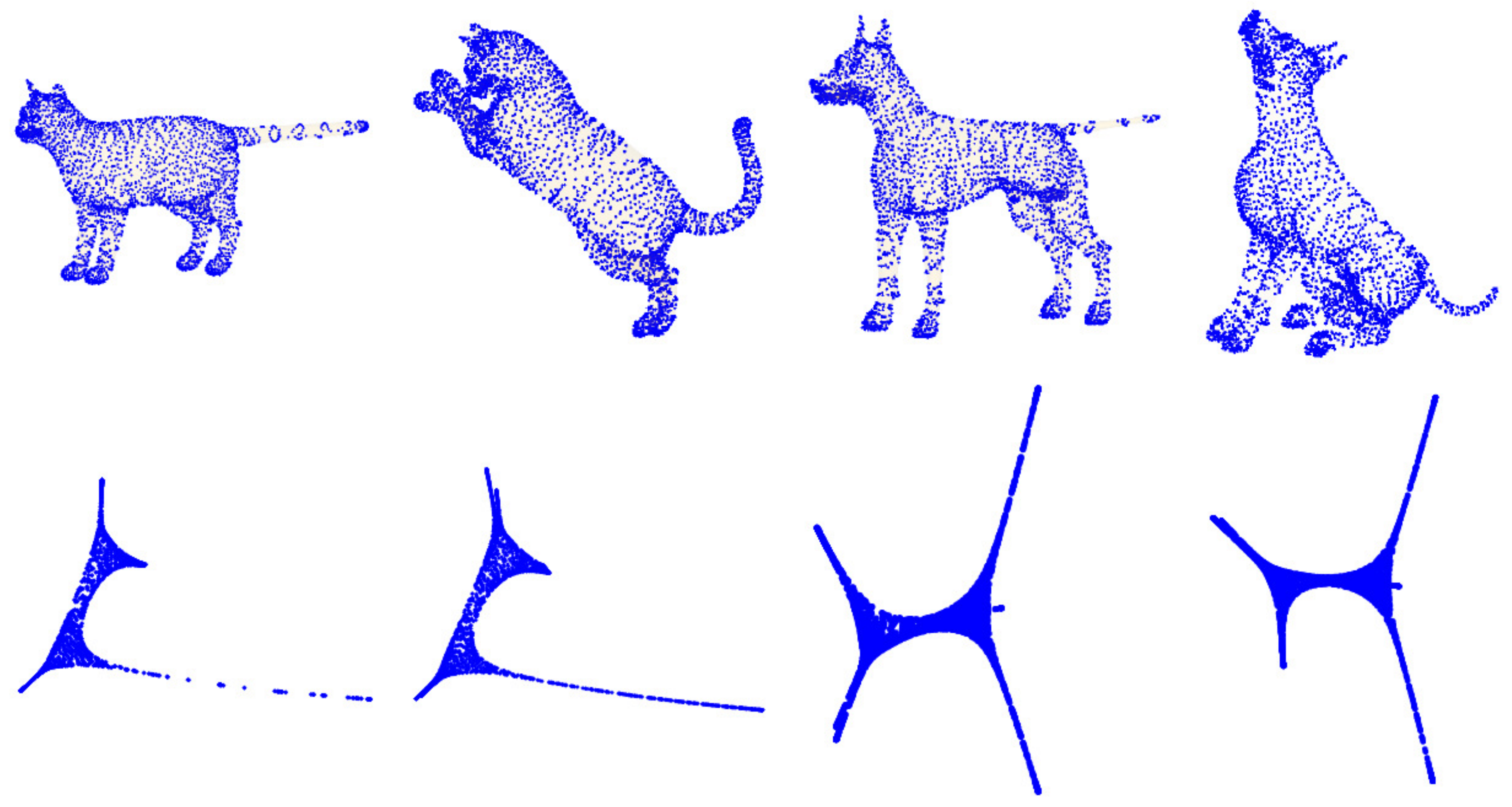}\\
%\begin{minipage}{0.24\linewidth}
%\includegraphics[width=1\linewidth]{cat_1.eps}\\
%\end{minipage}\hfill
%\begin{minipage}{0.24\linewidth}
%\includegraphics[width=1\linewidth]{cat_2.eps}\\
%\end{minipage}\hfill
%\begin{minipage}{0.24\linewidth}
%\includegraphics[width=1\linewidth]{dog_1.eps}\\
%\end{minipage}\hfill
%\begin{minipage}{0.24\linewidth}
%\includegraphics[width=.85\linewidth]{dog_2.eps}\\
%\end{minipage}\hfill\\
%\vspace{-0.5cm}
%\begin{minipage}{0.24\linewidth}
%\includegraphics[width=1\linewidth]{cat_1_eigenembedding_first3eigs.eps}\\
%\end{minipage}\hfill
%\begin{minipage}{0.24\linewidth}
%\includegraphics[width=1\linewidth]{cat_2_eigenembedding_first3eigs.eps}\\
%\end{minipage}\hfill
%\begin{minipage}{0.24\linewidth}
%\includegraphics[width=.7\linewidth]{dog_1_eigenembedding_first3eigs.eps}\\
%\end{minipage}\hfill
%\begin{minipage}{0.24\linewidth}
%\includegraphics[width=.7\linewidth]{dog_2_eigenembedding_first3eigs.eps}\\
%\end{minipage}\hfill\\
\end{center}\vspace{-0.7cm}
\caption{LB eigenmap using the first three nontrivial LB eigenfunctions. }
\label{fig:LBeigenembedding}
\end{figure}

Note that the above LB eigenmap is scale invariant from Proposition~\ref{prop:scaleformula}. In addition, if a point cloud is sampled from a two dimensional manifold, this embedding is the same as the global point signature (GPS) proposed in~\cite{Rustamov:2007}. Figure. \ref{fig:LBeigenembedding} illustrates the LB eigenmap using the first three nontrivial LB eigenfunctions for point clouds sampled from cats and dogs, where each pair of point clouds for dogs and cats are sampled from surfaces which are close to isometric. First, we can clearly see the LB eigenmap is intrinsic while the coordinated representation of original point cloud embedded in Euclidean coordinates is not. So using LB eigenmap can 
remove isometric variance in non-rigid deformation automatically. Second, the LB eigenmap gives a natural multi-scale representation of the underlying manifold. Figure \ref{fig:LBeigenSmoothing} shows multi-scaled reconstructions using different number of LB eigenfunctions to represent $(x, y, z)$ coordinate functions of give data. It is clearly to see that detailed geometric information of point clouds can be gradually recovered using more and more LB eigenfunction. These two important properties are the motivation for our proposed registration model.

\begin{figure}[h]
\begin{center}
\includegraphics[width=1\linewidth]{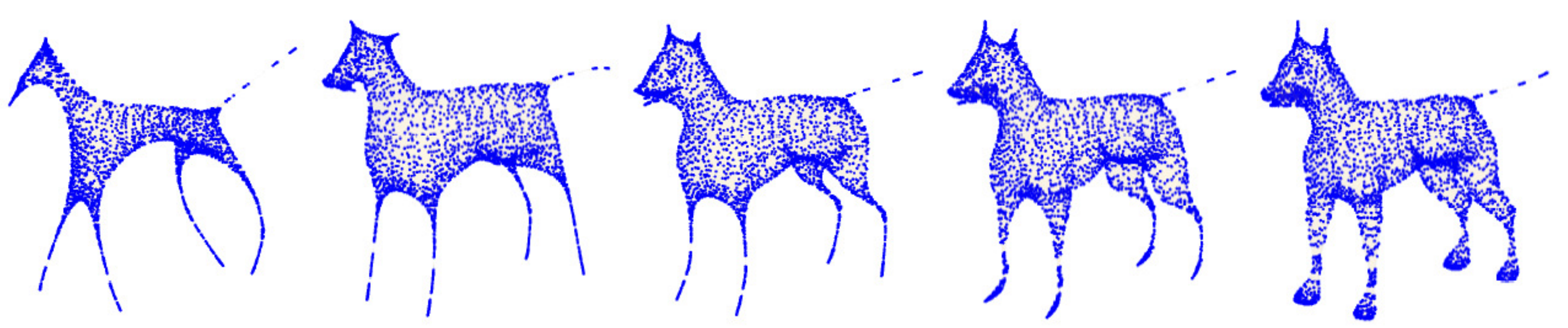}\\
%\begin{minipage}{0.2\linewidth}
%\includegraphics[width=1\linewidth]{Dog1_EigSmoothing_10.eps}\\
%\end{minipage}\hfill
%\begin{minipage}{0.2\linewidth}
%\includegraphics[width=1\linewidth]{Dog1_EigSmoothing_20.eps}\\
%\end{minipage}\hfill
%\begin{minipage}{0.2\linewidth}
%\includegraphics[width=1\linewidth]{Dog1_EigSmoothing_50.eps}\\
%\end{minipage}\hfill
%\begin{minipage}{0.2\linewidth}
%\includegraphics[width=1\linewidth]{Dog1_EigSmoothing_100.eps}\\
%\end{minipage}\hfill
%\begin{minipage}{0.2\linewidth}
%\includegraphics[width=1\linewidth]{Dog1_EigSmoothing_150.eps}\\
%\end{minipage}\hfill \vspace{-0.6cm}\\
\end{center}\vspace{-0.5cm}
\caption{Multi-scale representation using the first $n$ LB eigenfunctions to represent $(x, y, z)$ coordinate functions.}
\label{fig:LBeigenSmoothing}
\end{figure}

However, LB eigenmap introduces a few new problems due to ambiguities inherited in LB or any other eigen-system. For example, if $\phi_i$ is an LB eigenfunction, so does $-\phi_i$. Moreover, if the dimension $d_i$ of the eigen-space corresponding to $\lambda_i$, $\Big\{\phi~|~ \Delta_{\M}\phi = -\lambda_i\phi \Big\}$,  is greater than 1, then there is a freedom of orthonormal group $O(d_i)$ to choose those LB eigenfunctions. A rigorous distance has been discussed in~\cite{Berard:1994} to overcome this ambiguity while computation complexity is quite high. In addition, there is another potential issue coming from numerical computation of the LB eigen-system, where if two eigenvalues are too close, then ordering of LB spectrum may be switched. In Figure \ref{fig:LBeigenAmbiguity}, we illustrate examples for these ambiguities by showing the first four nontrivial LB eigenfunctions of two point clouds sampled from dog surfaces, which are obtained from the public available data base TOSCA~\cite{Bronstein:2006,Bronstein:2007,Bronstein:2008}. It is clearly that the pattern of the first nontrivial LB eigenfunctions are quite consistent for these two point clouds, while the second nontrivial LB eigenfunctions have a sign change and the third and fourth LB eigenfunctions have order switching for the two point clouds.

\begin{figure}[htp]
\begin{center}
\includegraphics[width=1\linewidth]{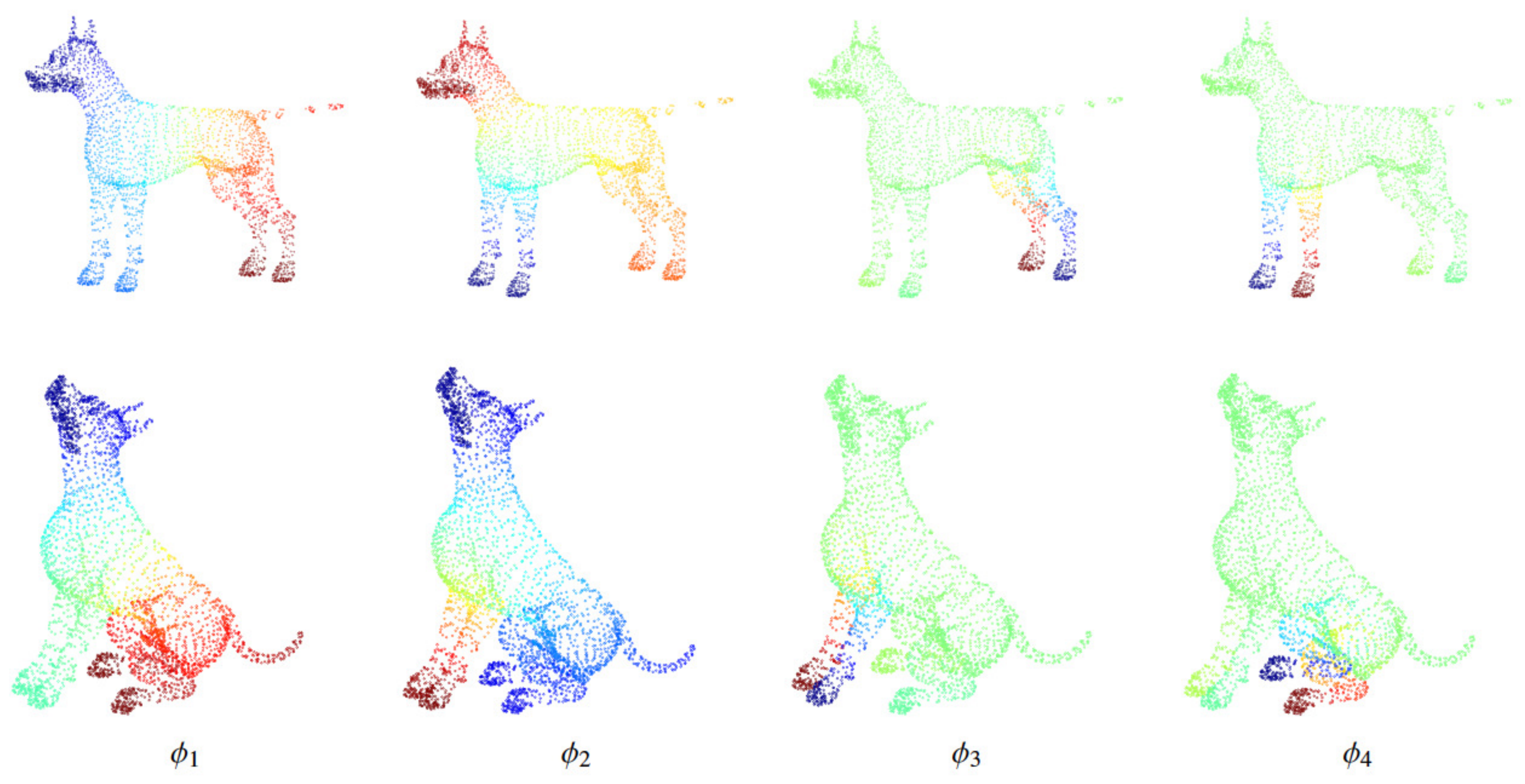}\\
%\begin{minipage}{0.24\linewidth}
%\includegraphics[width=1\linewidth]{dog1_eig1.eps}\\
%\end{minipage}\hfill
%\begin{minipage}{0.24\linewidth}
%\includegraphics[width=1\linewidth]{dog1_eig2.eps}\\
%\end{minipage}\hfill
%\begin{minipage}{0.24\linewidth}
%\includegraphics[width=1\linewidth]{dog1_eig3.eps}\\
%\end{minipage}\hfill
%\begin{minipage}{0.24\linewidth}
%\includegraphics[width=1\linewidth]{dog1_eig4.eps}\\
%\end{minipage}\hfill\\
%\vspace{-0.4cm}
%\begin{minipage}{0.24\linewidth}
%\includegraphics[width=.9\linewidth]{dog2_eig1.eps}\\
%\end{minipage}\hfill
%\begin{minipage}{0.24\linewidth}
%\includegraphics[width=.9\linewidth]{dog2_eig2.eps}\\
%\end{minipage}\hfill
%\begin{minipage}{0.24\linewidth}
%\includegraphics[width=.9\linewidth]{dog2_eig3.eps}\\
%\end{minipage}\hfill
%\begin{minipage}{0.24\linewidth}
%\includegraphics[width=.9\linewidth]{dog2_eig4.eps}\\
%\end{minipage}\hfill\\
%\vspace{-0.4cm}
%\begin{minipage}{0.24\linewidth}
%\centering $\phi_1$
%\end{minipage}\hfill
%\begin{minipage}{0.24\linewidth}
%\centering $\phi_2$
%\end{minipage}\hfill
%\begin{minipage}{0.24\linewidth}
%\centering $\phi_3$
%\end{minipage}\hfill
%\begin{minipage}{0.24\linewidth}
%\centering $\phi_4$
%\end{minipage}\hfill\\
\end{center}\vspace{-0.5cm}
\caption{Ambiguities of LB eigenmap, where the first four nontrivial LB eigenfunctions are color-coded on the point clouds. The color coding from red to blue means value of the corresponding LB eigenfunction varies from positive to negative.}
\label{fig:LBeigenAmbiguity}
\end{figure}

Although using LB eigenmap removes the variance of all isometric transformations in the original representations,  possible ambiguities described in above from the LB eigenmap can cause serious problems and lead to unsatisfactory point clouds registration. 
In fact, all these ambiguities can be modeled as an orthonormal group action on the image of LB eigenmap.  
More specifically, to handle ambiguities due to arbitrary sign, multiplicity or possible order switching of LB eigen-system, we would like to align $\P = \iota_{\M}^n (\M) \subset \RR^n$ and $ \Q = \iota_{\N}^n (\N) \subset \RR^n$ in $\mathbb{R}^n$ up to an orthonormal transformation.  Intuitively, a one-to-one and onto map should be constructed to have an alignment between $\P$ and $\Q$. In practice, however, certain discretization has to be introduced to sample $\M$ and $\N$. In many cases, sampling of $\M$ and $\N$ do not necessarily have the same number of points. Thus, a one-to-one and onto map does not make sense in this case. Here, we would like to view a registration in the distribution sense and propose the following registration model for $\P$ and $\Q$ based on the framework of optimal transport theory, which was first introduced by G. Monge~\cite{Monge:1781}  in 1781. A breakthrough of the optimal transport theory has been made in 1940's by Kantorovich~\cite{Kantorovich:1942} who proposed a relaxed version of Monge's problem allowing mass splitting, which is known as the Monge-Kantorovich problem described as follows.

Given two complete and separable metric spaces $\P, \Q \subset\RR^n$, we write $\sB(\P), \sB(\Q)$ as the sets of Borel measurable sets on $\P, \Q$ respectively, and write $\sP(\P), \sP(\Q)$ as the sets of Borel probability measures on $\P, \Q$ respectively. For fixed measures $\mu^{\P}\in\sP(\P), \mu^{\Q}\in\sP(\Q)$, we define the admissible set
\begin{equation}
\label{eq:ad}
\ADM(\mu^\P, \mu^\Q) = \Big\{\sigma \in\sP(\P\times\Q)~|~ \sigma(A\times \Q ) = \mu^\P(A), \forall A\in\sB(\P) \mbox{ and } \sigma(\P\times B) = \mu^\Q(B), \forall B\in\sB(\Q) \Big\}
\end{equation}
The Monge-Kantorovich optimal transportation problem is to find an optimal transport plan by minimizing:
\begin{equation}
 \min_{\sigma} \int_{\P\times\Q} \| p  - q\|^2_2 ~\d \sigma(p,q), \qquad \text{s.t. } \qquad  \sigma \in\ADM(\mu^\P, \mu^\Q)\label{eqn:MKOT}
\end{equation}
The existence of an optimal transport plan is guaranteed since the admissible set $\ADM(\mu^\P, \mu^\Q)$ is compact w.r.t. the weak topology in $\sP(\P\times\Q)$. The optimal $\sigma\in\ADM(\mu^\P, \mu^\Q)  $ for the above transportation problem can be viewed as a correspondence or mapping in distribution sense. The mass at each location $p\in \P$, which is $\mu(p)$, is transported to all $q\in\Q$ with distribution $\sigma(p,q)$ and vice versa. It also provides a probability interpretation of the correspondence:  $p$ maps to each $q$ with probability $\displaystyle\frac{\sigma(p,q)}{\mu(p)}$. Mathematically, the optimal transportation problem \eqref{eqn:MKOT} is a relaxed formulation of the Monge's original mass transportation problem~\cite{Monge:1781} where one wants to find a measurable map $T: \P\rightarrow \Q$ satisfying 
\begin{equation}
 \min_{T} \int_{\P} \| p  - T(p)\|^2_2 ~\d \mu^{\P}(p), \mbox{s.t.}\qquad T_{\#}\mu^{\P} = \mu^{\Q} 
\end{equation}
where $T_{\#}\mu^{\P}$ is the push forward of $\mu^{\P}$ by $T$. Monge's optimal transport problem can be ill-posed because admissible $T$ may not exist and the constraint $T_{\#}\mu^{\P} = \mu^{\Q}$ is not weakly sequentially closed. We use the Monge-Kantorovich optimal transportation~\eqref{eqn:MKOT} because it theoretically well-posed and provides more generality and flexibility for our problem as we will comment more later. 
The optimal transport theory has been widely used in many fields including economy~\cite{carlieroptimal}, fluid dynamics~\cite{benamou2000computational}, partial different equations~\cite{jordan1998variational}, optimization~\cite{bouchitte1997mathematiques} as well as image processing~\cite{Ni:2009local,Su:2013area,rabin2011wasserstein}. 
More details about the optimal transport theory can be found in \cite{Cedric2003topics, villani2008optimal, Ambrosio2013user}, from where we also adopted notations for this work. In our problem, we would like to align $\P$ and $\Q$ obtained from LB eigenmap up to an orthonormal matrix to remove those ambiguities inherited in LB eigenmap as discussed before. Therefore, we propose to model the registration problem as
\begin{equation}
\min_{R} \min_{\sigma} \int_{\P\times\Q} \|p R - q\|^2_2 ~\d \sigma(p,q), \qquad \text{s.t. } \qquad  \sigma \in\ADM(\mu^\P, \mu^\Q), \quad R\in O(n) = \{ R\in\mathbb{R}^{n\times n} ~|~ R R^T = I_n\}
\label{eqn:RWdistance}
\end{equation}

\begin{figure}[h]
\centering
\includegraphics[width=.7\linewidth]{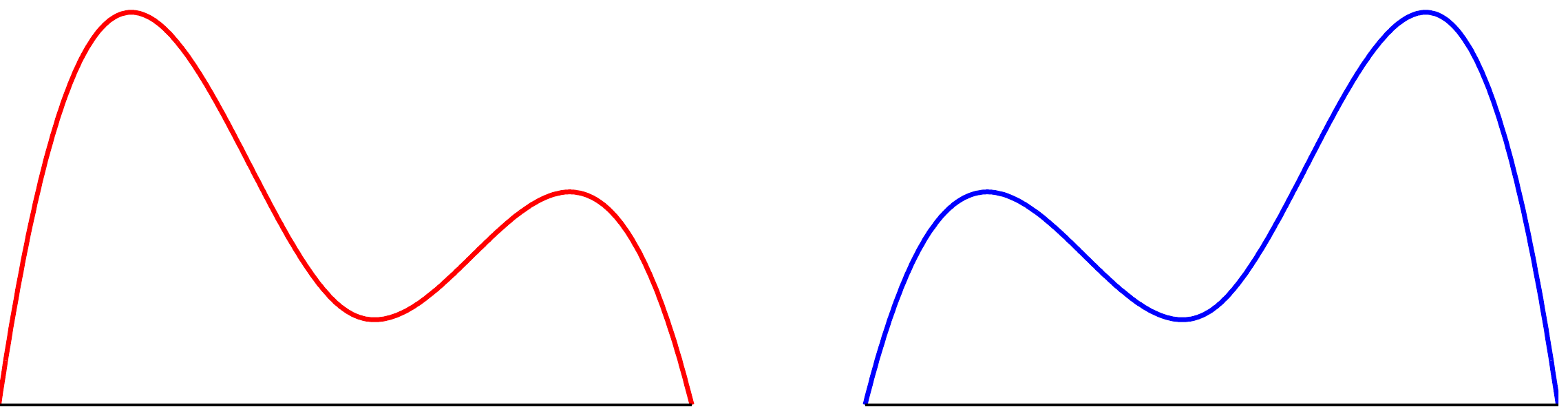}\\
\caption{Two distributions in $\RR$ up to a rotation $R\in O(1)$. }
\label{fig:1D_RWD}
\end{figure}
As an advantage of the above proposed model, the newly introduced rotation matrix $R$, can successfully handle possible ambiguity of the data representation. A simple example is illustrated in Figure~\ref{fig:1D_RWD}, where we consider $\P = \Q = \RR$ with two distributions up to a sign misassigment. In this case, the canonical Monge-Kantorovich model will not be able to figure out this misalignment, while this can be successfully tackled using~\eqref{eqn:RWdistance} as $O(1) = \{-1, 1\}$. Similar situation can also be handled in the case of high dimension data whose representation has ambiguity up to an orthonormal matrix. Therefore, the proposed model is more flexible and robust than the canonical Monge-Kantorovich model for data with possible misassignment up to $O(n)$. 
 
In practice, we assume $\M$ and $\N$ are sampled by two sets of points denoted by $ \{u_i\in\M \subset \mathbb{R}^D~|~ i = 1,\cdots,\ell_M\}$ with a mass distribution  $\mu^{\M}\in \RR_+^{\ell_M}$ and $ \{v_i\in \N\subset \mathbb{R}^D~|~ i = 1,\cdots,\ell_N\}$ with a mass distribution $\mu^{\N}\in \RR_+^{\ell_N}$. Hence, our task is to find a meaningful mapping or registration between two point clouds, $ \Big(\Big\{p_i = \iota_{\M}^n(u_i)\Big\}_{ i = 1}^{\ell_M},\mu^\M \Big)$ and $\Big( \Big\{q_i = \iota_{\N}^n(v_i)\Big \}_{i = 1}^{\ell_N}, \mu^\N \Big)$,  in $\mathbb{R}^n$ up to a orthonormal matrix using the optimal transportation model proposed in \eqref{eqn:RWdistance}. 
For convenience and consistency, we let $\ell_P = \ell_M, \ell_Q = \ell_N, \mu^P = \mu^\M, \mu^Q = \mu^\N$ and write row vectors $p_i$ and  $q_i$ as the $i-$th row of matrices $P \in\RR^{\ell_\P\times n}$ and $Q \in\RR^{\ell_\Q\times n}$ respectively. In other words, we need to find an optimal transport plan $\sigma \in\mathbb{R}^{\ell_P\times \ell_Q}$ and an orthonormal matrix $R$ such that the mass transportation from $(P R, \mu^P)$ to $(Q, \mu^Q)$ is minimal. In this discrete setting, the corresponding admissible set of optimal transportation plan is
\begin{equation}
\ADM(\mu^P,\mu^Q) = \Big\{\sigma = (\sigma_{ij})\in\mathbb{R}^{\ell_P\times \ell_Q} ~|~ \sigma \vec{1} = \mu^P, \sigma^T \vec{1} = \mu^Q, \sigma_{i,j} \geq 0, i = 1,\cdots,\ell_P,  j = 1,\cdots,\ell_Q  \Big\}
\label{eqn:discreteADM}
\end{equation}
where $\vec{1}$ denotes a column vector with all elements constant 1.
% We also write $P = \Big(\iota_{\U}^n(p_1),\cdots,\iota_{\U}^n(p_N)\Big)$ and $Q = \Big(\iota_{\V}^n(q_1),\cdots,\iota_{\V}^n(q_N)\Big)$ as two matrices. 
As a discrete version of the optimization problem \eqref{eqn:RWdistance}, the registration of two point clouds $P$ and $Q$ can be formulated as the following optimization problem:
\begin{equation}
\min_{R} \min_{\sigma} \sum_{i,j} \sigma_{i,j} ~\| p_i R  -q_j \|^2_2 ,\quad s.t. \quad R \in O(n),\quad \sigma \in \ADM(\mu^P,\mu^Q)
\label{eqn:Discrete_RWdistance}
\end{equation}
 Now we introduce a few definitions and notations to state properties of the proposed problem. We first define
\begin{equation}
\mathfrak{M}_n = \displaystyle \bigcup_{\ell=1}^{\infty}\left\{ (P,\mu) = \{(p_i,\mu_i)\in\RR^{n}\times \RR_{+} \}_{i=1}^\ell~|~ \displaystyle\sum_{i=1}^{\ell}\mu_i = 1, p_i\neq p_j \mbox{ if } i\neq j\right\}
\end{equation} as the space of all possible point clouds in $\RR^n$ that has $\ell=1, 2, \ldots$ number of points asscoiated with a measure $\mu$. We define an equivalence relation for two point clouds $(P,\mu^P),(Q,\mu^Q)\in \mathfrak{M}_n$, with number of points $\ell_P,\ell_Q$ respectively as 
$$(P, \mu^P) \thicksim (Q,\mu^Q) \Longleftrightarrow  
 \left\{\begin{array}{c} P \text{ and }  Q \text{ have the same number of points,  i.e. } \ell_P = \ell_Q \\
 \text{there exist $R\in O(n)$ and a permutation $\pi$ of } [1,\cdots,\ell_P] \quad \text{s.t.} \quad p_i R =  q_{\pi(i)},  ~~\mu^P_{i} = \mu^Q_{\pi(i)}
 \end{array}\right.
$$ 
We define robust Wasserstein distance (RWD) as
\begin{equation}
\label{eq:RWD}
\RWD\Big((P,\mu^P),(Q,\mu^Q)\Big)^2 = \min_{R \in O(n),\sigma\in \ADM(\mu^P,\mu^Q)} \sum_{i,j} \sigma_{i,j} ~\| p_i R  -q_j \|^2_2
%\label{eqn:RW}
\end{equation}

The following statements justify the well-posedness of the problem \eqref{eqn:Discrete_RWdistance} and the definition of $\RWD$.
\begin{theorem} 
\label{thm:Discrete_RW}
%Let $\mathfrak{M}_n = \displaystyle \bigcup_{\ell=1}^{\infty}\left\{ (P,\mu) = \{(p_i,\mu_i)\in\RR^{n}\times \RR_{+} \}_{i=1}^\ell~|~ \displaystyle\sum_{i=1}^{\ell}\mu_i = 1, p_i\neq p_j \mbox{ if } i\neq j\right\}$.  For any $(P,\mu^P),(Q,\mu^Q)\in \mathfrak{M}_n$, we write  the number of points in $P$ and $Q$ by $\ell_P,\ell_Q$ respectively and introduce an equivalent relation on $\mathfrak{M}_n$ as follows
% $$(P, \mu^P) \thicksim (Q,\mu^Q) \Longleftrightarrow  
% \left\{\begin{array}{c} P \text{ and }  Q \text{ have the same number of points,  i.e. } \ell_P = \ell_Q \\
% \text{there exist $R\in O(n)$ and a permutation $\pi$ of } [1,\cdots,\ell_P] \quad \text{s.t.} \quad p_i R =  q_{\pi(i)},  ~~\mu^P_{i} = \mu^Q_{\pi(i)}
% \end{array}\right.
%$$ 
% and denote
%\begin{equation}
%\RW\Big((P,\mu^P),(Q,\mu^Q)\Big)^2 = \min_{R \in O(n),\sigma\in \ADM(\mu^P,\mu^Q)} \sum_{i,j} \sigma_{i,j} ~\| p_i R  -q_j \|^2_2
%%\label{eqn:RW}
%\end{equation}
%Let $\sigma^{PQ}$ be an optimizer to attain $\RW((P,\mu^P),(Q,\mu^Q))$, we define $\Pi_{\sigma^{PQ}} = diag(\mu_{P_1}^{-1},\cdots,\mu_{P_{\ell_P}}^{-1}) \sigma^{PQ}$ as a map from $(P,\mu^P)$ to $(Q,\mu^Q)$ in the distribution sense. 
%The following statements hold for $\RW(\cdot,\cdot)$,
\begin{enumerate}
\item The optimization problem \eqref{eqn:Discrete_RWdistance} has at least one solution. 
%\item  If $(P,\mu^P)\thicksim (P',\mu^{P'})$, then $\RWD\Big((P,\mu^P),(Q,\mu^Q)\Big) = \RWD\Big((P',\mu^{P'}),(Q,\mu^{Q})\Big) $.
\item  If $(P,\mu^P)\thicksim (P',\mu^{P'})$ and $(Q,\mu^Q)\thicksim (Q',\mu^{Q'})$, then $\RW\Big((P,\mu^P),(Q,\mu^Q)\Big) = \RW\Big((P',\mu^{P'}),(Q',\mu^{Q'})\Big) $.
\item $\RWD\Big((P,\mu^P),(Q,\mu^Q)\Big) \geq 0$, and $\RWD\Big((P,\mu^P),(Q,\mu^Q)\Big) = 0 \quad \Longleftrightarrow \quad (P,\mu^P) \thicksim (Q,\mu^Q)$.
\item $\RWD\Big((P,\mu^P),(Q,\mu^Q)\Big) =\RWD\Big((Q,\mu^Q),(P,\mu^P)\Big)$.
\item For any $(S,\mu^S) \in \mathfrak{M}_n$, $\RWD\Big((P,\mu^P),(Q,\mu^Q)\Big) \leq \RWD\Big((P,\mu^P),(S,\mu^S)\Big)+\RWD\Big((S,\mu^S),(Q,\mu^Q)\Big)$, 
\end{enumerate}
Thus, $\RWD(\cdot,\cdot)$ defines a distance on $\mathfrak{M}_n$.
\end{theorem}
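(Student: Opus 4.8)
The plan is to verify the five listed properties in turn, with the metric axioms (items 3--5) reducing, after the $O(n)$-minimization is stripped away, to known facts about the quadratic Wasserstein distance $W_2$ on finitely supported measures. I would set up notation first: for fixed $R\in O(n)$, write $W_2\big((PR,\mu^P),(Q,\mu^Q)\big)^2 = \min_{\sigma\in\ADM(\mu^P,\mu^Q)}\sum_{i,j}\sigma_{i,j}\|p_iR-q_j\|_2^2$, so that $\RWD\big((P,\mu^P),(Q,\mu^Q)\big) = \min_{R\in O(n)} W_2\big((PR,\mu^P),(Q,\mu^Q)\big)$. The point is that the inner problem is exactly the classical discrete optimal transport problem, for which $W_2$ is known to be a metric on finitely supported probability measures in $\RR^n$; I will invoke this.

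\textbf{Existence (item 1).} The admissible set $\ADM(\mu^P,\mu^Q)$ is a nonempty compact polytope in $\RR^{\ell_P\times\ell_Q}$ (it contains the product coupling $\mu^P(\mu^Q)^T$ and is cut out by finitely many linear equalities and inequalities), and $O(n)$ is compact. The objective $(R,\sigma)\mapsto\sum_{i,j}\sigma_{i,j}\|p_iR-q_j\|_2^2$ is continuous on the compact product $O(n)\times\ADM(\mu^P,\mu^Q)$, so a minimizer exists by Weierstrass.

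\textbf{Invariance under $\thicksim$ (item 2) and symmetry (item 4).} If $(P,\mu^P)\thicksim(P',\mu^{P'})$ via $R_0\in O(n)$ and a permutation $\pi$, then for any competitor $(R,\sigma)$ for the $(P',Q')$ problem one obtains a competitor for the $(P,Q)$ problem of the same cost by composing with $R_0$ on the $R$-variable and relabeling indices by $\pi$ (and similarly on the $Q$ side); doing this in both directions gives equality. Symmetry follows because swapping $P\leftrightarrow Q$ and replacing $R$ by $R^{-1}=R^T\in O(n)$ and $\sigma$ by $\sigma^T$ leaves $\sum\sigma_{i,j}\|p_iR-q_j\|_2^2 = \sum\sigma_{i,j}\|p_i-q_jR^T\|_2^2$ unchanged, a bijection between the two feasible sets preserving cost.

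\textbf{Positivity/definiteness (item 3) and triangle inequality (item 5).} Nonnegativity is immediate. For the "$=0$" direction: $\RWD=0$ means there is $R\in O(n)$ with $W_2\big((PR,\mu^P),(Q,\mu^Q)\big)=0$; since $W_2$ is a genuine metric this forces $PR$ and $Q$ to be the same measure, i.e. the point masses match up after a permutation, which is exactly $(P,\mu^P)\thicksim(Q,\mu^Q)$; the converse is item 3's easy half (or a special case of item 2 with $Q'=P$). For the triangle inequality, fix optimal rotations $R_1$ (for the pair $P,S$) and $R_2$ (for $S,Q$). Then, using that $R_1$ is an isometry of $\RR^n$ so $W_2\big((PR_1,\mu^P),(S,\mu^S)\big)=W_2\big((PR_1R_2,\mu^P),(SR_2,\mu^S)\big)$, the ordinary triangle inequality for $W_2$ gives
\begin{align*}
\RWD\big((P,\mu^P),(Q,\mu^Q)\big) &\le W_2\big((PR_1R_2,\mu^P),(Q,\mu^Q)\big)\\
&\le W_2\big((PR_1R_2,\mu^P),(SR_2,\mu^S)\big) + W_2\big((SR_2,\mu^S),(Q,\mu^Q)\big)\\
&= W_2\big((PR_1,\mu^P),(S,\mu^S)\big) + W_2\big((SR_2,\mu^S),(Q,\mu^Q)\big)\\
&= \RWD\big((P,\mu^P),(S,\mu^S)\big) + \RWD\big((S,\mu^S),(Q,\mu^Q)\big),
\end{align*}
where the first line uses $R_1R_2\in O(n)$ as an admissible (not necessarily optimal) rotation for the $P,Q$ problem.

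The only genuinely delicate point is item 3, and within it the direction $\RWD=0\Rightarrow\thicksim$: one must be careful that the point clouds in $\mathfrak{M}_n$ have \emph{distinct} points $p_i\neq p_j$, so that a coupling of zero cost between $PR$ and $Q$ really does induce a permutation with $\mu^P_i=\mu^Q_{\pi(i)}$ rather than merely an equality of aggregated measures; this is where the defining restriction on $\mathfrak{M}_n$ is used, and I would spell it out. Everything else is bookkeeping built on the fact that $O(n)$ acts by isometries and that discrete $W_2$ is already known to be a metric.
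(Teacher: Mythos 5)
Your proof is correct and follows the same overall skeleton as the paper's: Weierstrass on the compact product $O(n)\times\ADM(\mu^P,\mu^Q)$ for existence, a change of variables ($\hat R=R_0R$, $\hat\sigma_{ij}=\sigma_{\pi(i),j}$ for invariance; $\hat R=R^{-1}$, $\hat\sigma=\sigma^T$ for symmetry), and composition of the two optimal rotations for the triangle inequality. The one place where your route genuinely differs is in items 3 and 5, where you invoke as a black box that the discrete quadratic Wasserstein distance $W_2$ is already a metric on finitely supported measures, whereas the paper proves both facts inline: for the triangle inequality it explicitly constructs the glued three-index plan $\sigma^{PSQ}_{ijk}=\sigma^{PS}_{ij}\sigma^{SQ}_{jk}/\mu^S_j$, checks its marginals, and then applies the Minkowski/Cauchy--Schwarz step; for definiteness it argues directly that the optimal plan $\sigma^{PQ}$, rescaled by $\mathrm{diag}\bigl((\mu^P_1)^{-1},\dots,(\mu^P_{\ell_P})^{-1}\bigr)$, must be a permutation matrix. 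Your reduction is shorter and arguably cleaner, at the cost of citing the $W_2$ metric property (which is standard, and whose proof is exactly the gluing argument the paper writes out); the paper's version is self-contained. You are also right to flag the distinctness hypothesis $p_i\neq p_j$ built into $\mathfrak{M}_n$ as the point that upgrades equality of the pushed-forward measures to an actual permutation with matching weights --- that is precisely where the paper uses it as well. No gaps.
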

\begin{proof}
1. Note that the discretized version of $\ADM(\mu^P,\mu^Q)$ defined in \eqref{eqn:discreteADM} is a compact set in $\mathbb{R}^{\ell_P\times \ell_Q}$, and $O(n)$ is also a compact set. Moreover, the cost function defined in \eqref{eqn:Discrete_RWdistance} is certainly smooth with respect to variables $\sigma$ and $R$.  Thus problem \eqref{eqn:Discrete_RWdistance} has at least one solution. However, the uniqueness of solution is not guaranteed due to the orthonormal matrix constraint. 

2.  If $(P,\mu^P)\thicksim (P',\mu^{P'})$, then there exists a permutation  $\pi$
and an orthonormal matrix $R_0\in O(n)$ such that $p_i R_0 =  p'_{\pi(i)},  ~~\mu^P_{i} = \mu^{P'}_{\pi(i)}, i = 1,\cdots,\ell_P$. Then 

\begin{eqnarray}
\displaystyle \RWD\Big((P',\mu^{P'}),(Q,\mu^Q)\Big)^2 &= &\min_{R \in O(n),\sigma\in \ADM(\mu^P\circ\pi^{-1},\mu^Q)} \sum_{i,j} \sigma_{i,j} ~\| p_{\pi^{-1}(i)} R_0R  -q_j \|^2_2   \nonumber \\
&\ndtstile{\hat{R} = R_0 R }{\hat{\sigma}_{i,j} = \sigma_{\pi(i),j} }& \min_{\hat{R} \in O(n),\hat{\sigma}\in \ADM(\mu^P,\mu^Q)} \sum_{i,j} \hat{\sigma}_{i,j} ~\| p_i \hat{R}  -q_j \|^2_2   \nonumber \\ 
& = & \RWD\Big((P,\mu^{P}),(Q,\mu^Q)\Big)^2 \nonumber
\end{eqnarray}
Similar, one can also show that $\RW\Big((P,\mu^{P}),(Q',\mu^{Q'})\Big)^2 = \RW\Big((P,\mu^{P}),(Q,\mu^Q)\Big)^2$, if $(Q,\mu^Q)\thicksim (Q',\mu^{Q'})$, which yields the first statement.

3. It is clear to see that $\RWD\Big((P,\mu^{P}),(Q,\mu^Q)\Big) \geq 0$ and $(P,\mu^P) \thicksim (Q,\mu^Q)$ implies $\RWD\Big((P,\mu^{P}),(Q,\mu^Q)\Big) = 0$. Next, we show $\RWD\Big((P,\mu^{P}),(Q,\mu^Q)\Big) = 0 \Longrightarrow (P,\mu^P) \thicksim (Q,\mu^Q)$. Let $(R^{PQ}, \sigma^{PQ})$ be an optimizer to attain $\RWD\Big((P,\mu^{P}),(Q,\mu^Q)\Big)$. Since $p_1,\cdots,p_{\ell_P}$ are distinct points, for any $q_j$, there is at most one $p_i R^{PQ}$ satisfying $p_i R^{PQ} = q_j$. Similar reason yields that for for any $p_i$, there is at most one $q_j$ satisfying $p_i R^{PQ} = q_j$. In addition, the condition that none of row vectors and column vectors of $\sigma^{PQ}$ are zeros vectors implies that $PR$ and $Q$ have one-to-one correspondence. In fact, $\Pi_{\sigma^{PQ}} = diag(\mu_{P_1}^{-1},\cdots,\mu_{P_{\ell_P}}^{-1}) \sigma^{PQ}$ is a permutation matrix satisfying $PR = \Pi_{\sigma^{PQ}} Q$ and $\mu^{P} = \Pi_{\sigma^{PQ}} \mu^Q$.

4. Using change of variables, we have:
\begin{eqnarray}
\displaystyle\RWD\Big((P,\mu^{P}),(Q,\mu^Q)\Big)^2 &=& \min_{R \in O(n),\sigma\in \ADM(\mu^P,\mu^Q)} \sum_{i,j} \sigma_{i,j} ~\| p_{i}  R  -q_j \|^2_2   \nonumber \\
&\ndtstile{\hat{R} = R^{-1}}{\hat{\sigma} = \sigma^T }& \min_{\hat{R} \in O(n),\sigma\in \ADM(\mu^Q,\mu^P)} \sum_{i,j} \hat{\sigma}_{i,j} ~\| q_{i}  \hat{R}  - p_j \|^2_2 \nonumber \\
& = & \RWD\Big((Q,\mu^Q),(P,\mu^{P})\Big)^2
\end{eqnarray}

5. Given $(P,\mu^P), (Q,\mu^{Q}), (S,\mu^{S}) \in\mathfrak{M}_n$, we denote $(R^{PS} ,\sigma^{PS})$ and $(R^{SQ} ,\sigma^{SQ})$ as optimizers of $\RWD\Big((P,\mu^{P}),(S,\mu^S)\Big)$ and $\RWD\Big((S,\mu^{S}),(Q,\mu^Q)\Big)$ respectively. In other words, we have
\begin{eqnarray}
\RWD\Big((P,\mu^{P}),(S,\mu^S)\Big) =  \left(\sum_{i,j} \sigma^{PS}_{i,j} ~\| p_{i}  R^{PS}  -s_j \|^2_2\right)^{1/2}, & \qquad \sum_{j=1}^{\ell_S}\sigma^{PS}_{i,j} = \mu^{P}_i, \qquad  \sum_{i=1}^{\ell_P}\sigma^{PS}_{i,j} = \mu^{S}_j  \\
\RWD\Big((S,\mu^{S}),(Q,\mu^Q)\Big) =  \left( \sum_{j,k} \sigma^{SQ}_{j,k} ~\| s_{j}  R^{SQ}  -q_k \|^2_2 \right)^{1/2}, &\qquad \sum_{k=1}^{\ell_Q}\sigma^{SQ}_{j,k} = \mu^{S}_j, \qquad  \sum_{j=1}^{\ell_S}\sigma^{SQ}_{j,k} = \mu^{Q}_k 
\end{eqnarray}

We define $\displaystyle \sigma^{PSQ}_{ijk} = \frac{1}{\mu^{S}_j}\sigma^{PS}_{ij}\sigma^{SQ}_{jk}$ and $R^{PSQ} = R^{PS}R^{SQ}$. It is clear that $\displaystyle\sum_j \sigma^{PSQ}_{ijk} \in\ADM(\mu^P,\mu^Q)$ as $\displaystyle \sum_{j,k} \sigma^{PSQ}_{ijk} = \mu^P_i$ and $\displaystyle \sum_{i,j} \sigma^{PSQ}_{ijk} = \mu^Q_k$. Moreover, we have

\begin{eqnarray}
\displaystyle \RWD\Big((P,\mu^{P}),(Q,\mu^Q)\Big)  & \leq&  \left(\sum_{i,k} \sum_{j} \sigma^{PSQ}_{i,j,k} ~\| p_{i}  R^{PSQ}  -  q_k \|^2_2 \right)^{1/2}   \nonumber \\
& \leq & \left(\sum_{i,j,k}  \sigma^{PSQ}_{i,j,k} ~\Big( \| p_{i}  R^{PS}R^{SQ}  - s_j R^{SQ} \|_2 + \| s_{j}  R^{SQ} - q_k  \|_2 \Big)^2 \right)^{1/2}   \nonumber \\
&\leq & \left( \sum_{i,j,k} \sigma^{PSQ}_{i,j,k} ~ \| p_{i}  R^{PS}R^{SQ}  - s_j R^{SQ} \|_2^2 \right)^{1/2}  + \left(\sum_{i,j,k} \sigma^{PSQ}_{i,j,k} \| s_{j}  R^{SQ} - q_k  \|_2 \Big)^2 \right)^{1/2}  \nonumber \\
& = & \left( \sum_{i,j} \sigma^{PS}_{i,j} ~ \| p_{i}  R^{PS} - s_j  \|_2^2 \right)^{1/2}  + \left(\sum_{j,k} \sigma^{SQ}_{j,k} \| s_{j}  R^{SQ} - q_k  \|_2 \Big)^2 \right)^{1/2}  \nonumber \\
& = & \RWD\Big((P,\mu^{P}),(S,\mu^S)\Big) + \RWD\Big((S,\mu^{S}),(Q,\mu^Q)\Big)
\end{eqnarray}
where the last inequality comes from the Cauchy-Schwartz inequality. This completes the proof.
\end{proof}

Let $\sigma^{PQ}$ be an optimizer that attains $\RWD((P,\mu^P),(Q,\mu^Q))$ defined in \eqref{eq:RWD}, we define $\Pi_{\sigma^{PQ}} = diag(\mu_{P_1}^{-1},\cdots,\mu_{P_{\ell_P}}^{-1}) \sigma^{PQ}$ as a map from $(P,\mu^P)$ to $(Q,\mu^Q)$ in the distribution sense. A map defined in distribution sense provides both generality and flexibility as well as gives a natural probability interpretation. For example, $\Pi_{\sigma^{PQ}}$ has a natural probability interpretation which says a point $p_i$ maps to $q_j$ with probability $\mu_i^{-1}\sigma_{i,j}$.  One can also think of 
$\mu_i^{-1}\sigma_{i,j}$ as weights to define an interpolative mapping $\displaystyle p_i\mapsto \mu_i^{-1}\sum_{j=1}^{l_Q}\sigma_{i,j} q_j$. Using this formulation one can deal with point clouds having different number of points, e.g., point clouds with different sampling rate, as well as incorporate uncertainty, local feature information, and prior knowledge into the measures $\mu$ and $\nu$. % In the special case that $\ell_P=\ell_Q$ and a one-to-one correspondence is wanted, 
One can also use a post-process step to obtain correspondence in classical sense from $\sigma_{i,j}$. We show such examples in Section \ref{sec:NumericalResults}.

\begin{remark} 
\begin{enumerate}
%\item If $\ell_P  = \ell_Q = N$ and $\mu^P, \mu^Q$ are uniform density distribution as $1/N$. The optimizer $\sigma^{PQ}$ of the above model will be the same as a permutation matrix up to a constant. Namely, $\Pi_{\sigma^{PQ}} = N \sigma^{PQ}$ is a permutation matrix. In addition, if we write $R^{PQ}$ is the corresponding optimal rotation matrix. Then $ \displaystyle \RW\Big((P,\mu^P),(Q,\mu^Q)\Big)^2 = \sum_{i,j} \sigma^{PO}_{i,j} ~\| p_i R^{PQ}  -q_j \|^2_2 = \frac{1}{N}\| P R^{PQ} - \Pi_{\sigma^{PQ}} Q\|^2_F$. In this case, the registration $\Pi_{\sigma^{PQ}}$ obtained in the distribution sense is automatically a one-to-one and onto map. Thus, our proposed model can be viewed as a distribution version of the classical registration problem. 

\item $\RWD(\cdot,\cdot)$ can be naturally generalized to ``rotation" + ``translation", which can be used for more general registration problems. 
\item Since $\ADM(\mu^\P, \mu^\Q)$ defined in \eqref{eq:ad} is compact w.r.t. the weak topology in $\sP(\P\times\Q)$, one can similarly show that the proposed problem~\eqref{eqn:RWdistance} has at least one solution. Moreover, similar arguments used in this theorem can be used to show that the formula \eqref{eqn:RWdistance} also defines a distance for two spaces $\P, \Q \subset \RR^n$ with given mass distribution $\mu^{\P}, \mu^{\Q}$ respectively. %$n \rightarrow\infty$ get surface isometric mapping.

\end{enumerate}
\end{remark}
The constraints of the non convex set $O(n)$ in \eqref{eqn:Discrete_RWdistance} make the above optimization problem difficult to solve. Here, we consider to solve the following two minimization problems alternatively to approach its solution. 
\begin{eqnarray}
 \displaystyle R^k &=& \arg\min_{R} \sum_{i,j} \sigma^{k-1}_{i,j} ~\| p_i R -q_j \|^2_2  ,\quad ~ \mbox{s.t.}  \quad R^TR= I_n  \\
\displaystyle \sigma^k& = &\arg\min_{\sigma} \sum_{i,j} \sigma_{i,j} ~\|  p_i R^k  -q_j \|^2_2 ,\quad ~ \mbox{s.t.}  \quad  \sigma \in \ADM(\mu^P,\mu^Q) 
\label{eqn:RW_sigma}
\end{eqnarray}

The first minimization problem has a closed-form solution as follows.  
\begin{eqnarray} \label{eqn:ONProj}
R^k &=& \arg\min_{R\in O(n)} \sum_{i,j} \sigma^{k-1}_{i,j} ~\| p_i R  -q_j \|^2_2  \nonumber \\
& =& \arg\min_{R\in O(n)} \sum_{i,j}\sigma^{k-1}_{i,j} p_i p_i^T + \sum_{i,j}\sigma^{k-1}_{i,j} q_j q_j^T -2  \sum_{i,j} \sigma^{k-1}_{i,j} p_i R q_j^T \nonumber \\
&=& \arg\min_{R\in O(n)}  -2 Tr(\sigma^{k-1} Q R^T P^T) \nonumber \\
&=& \arg\min_{R\in O(n)}  \|R - P^T\sigma^{k-1} Q\|^2_F \nonumber \\
& = & \mathrm{Proj}_{O(n)} (P^T\sigma^{k-1} Q) = UV^T
\end{eqnarray}
where $U, V$ are provided by SVD decomposition $P^T\sigma^{k-1} Q = UDV^T$. 
The second minimization problem can be solved by linear programming methods in $O(N^{2.5}\log(N))$ operations. 
Thus we have the following algorithm based on the proposed alternative method, the drawback of which is the computation cost mainly due to the second sub-optimization problem. In practice, large size of point clouds are usually expected, which makes this method quite time consuming. This is also our major motivation to introduce a new distance in the next section. 
\begin{algorithm2e}\caption{Iterative Method for \eqref{eqn:Discrete_RWdistance}}
\label{alg:RW}
Initialize $R^0, \sigma^0 $ \\
\While{``not converge"}{
$\displaystyle R^k = \arg\min_{R\in O(n)}  \sum_{i,j} \sigma^{k-1}_{i,j} ~\| p_i R -q_j \|^2_2 = UV^T$, where $\mbox{SVD}(P^T\sigma^{k-1} Q)   = UDV^T$.\\
Solve $\displaystyle \sigma^k = \arg\min_{\sigma} \sum_{i,j} \sigma_{i,j} ~\|  p_i R^k  -q_j \|^2_2 ,~ \mbox{s.t.} ~ \sigma \in \ADM(\mu_P,\mu_Q) $  by a linear programming method. }
\end{algorithm2e}

Due the non-convexity of the problem \eqref{eqn:Discrete_RWdistance}, it is hard to find its global minimizer. However, the following theorem suggests that the above alternative method will provide a monotone sequence to approach a local minimizer. 
\begin{theorem}For any $R\in O(n)$ and $\sigma \in \ADM(\mu_P,\mu_Q)$, if we write $\displaystyle E(R,\sigma) = \sum_{i,j} \sigma_{i,j} ~\|  p_i R  -q_j \|^2_2$, then the sequence $\{R^k,\sigma^k\}_{k=1}^\infty$generated by Algorithm \ref{alg:RW} satisfies:
\begin{equation}
E(R^{k+1},\sigma^{k+1}) \leq E(R^{k},\sigma^{k})
\end{equation}
\end{theorem}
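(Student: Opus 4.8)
The plan is the standard alternating-minimization (block coordinate descent) monotonicity argument, which does not require convexity — only that each of the two subproblems is solved to global optimality. First I would record the defining property of the $R$-update: at step $k+1$ the algorithm sets $R^{k+1}$ to be a global minimizer of $R\mapsto E(R,\sigma^{k})$ over $O(n)$, which is legitimate because this subproblem has the closed-form solution $\mathrm{Proj}_{O(n)}(P^T\sigma^{k}Q)=UV^T$ derived in \eqref{eqn:ONProj}. Since $R^{k}$ is itself a feasible point of this subproblem, we immediately get $E(R^{k+1},\sigma^{k})\le E(R^{k},\sigma^{k})$.

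Next I would use the defining property of the $\sigma$-update: at step $k+1$ the algorithm sets $\sigma^{k+1}$ to be a global minimizer of $\sigma\mapsto E(R^{k+1},\sigma)$ over $\ADM(\mu^P,\mu^Q)$, which is a finite-dimensional linear program and hence genuinely solvable. Since $\sigma^{k}$ is feasible for this subproblem, we get $E(R^{k+1},\sigma^{k+1})\le E(R^{k+1},\sigma^{k})$. Chaining the two inequalities yields
\[
E(R^{k+1},\sigma^{k+1})\ \le\ E(R^{k+1},\sigma^{k})\ \le\ E(R^{k},\sigma^{k}),
\]
which is exactly the claimed monotonicity.

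The only points needing a word of care — and none of them is a real obstacle — are: (i) that both subproblems are attained at true global minimizers, which holds since $O(n)$ and $\ADM(\mu^P,\mu^Q)$ are compact (cf. part 1 of Theorem \ref{thm:Discrete_RW}) and $E$ is continuous, so the stated SVD and linear-programming solvers do return minimizers over the full feasible sets; and (ii) keeping the iteration indices consistent, namely that the $R$-update at step $k+1$ is taken against $\sigma^{k}$ and the subsequent $\sigma$-update against the freshly computed $R^{k+1}$. Beyond this bookkeeping there is no hard step: the proof is the textbook fact that exact alternating minimization over two blocks is nonincreasing in the objective, applied verbatim to the two blocks $R\in O(n)$ and $\sigma\in\ADM(\mu^P,\mu^Q)$.
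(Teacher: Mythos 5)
Your proposal is correct and is essentially identical to the paper's own proof: both use the fact that each block update is a global minimizer of its subproblem, so taking $R=R^k$ and $\sigma=\sigma^k$ as feasible comparison points yields $E(R^{k+1},\sigma^{k+1})\le E(R^{k+1},\sigma^{k})\le E(R^{k},\sigma^{k})$. The extra remarks on attainment of the minimizers and index bookkeeping are sound but not needed beyond what the paper already states.
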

\begin{proof} From the construction of $R^{k+1}$ and $\sigma^{k+1}$, we have:
\begin{equation}
E(R^{k+1},\sigma^{k}) \leq E(R,\sigma^{k}) \qquad \& \qquad  E(R^{k+1},\sigma^{k+1}) \leq E(R^{k+1},\sigma), \quad\mbox{for any} \quad R\in O(n), \sigma \in \ADM(\mu_P,\mu_Q) \nonumber
\end{equation}
Thus, let $R = R^k, \sigma = \sigma^k$ in the above two inequalities, we have:
\begin{equation}
E(R^{k+1},\sigma^{k+1}) \leq E(R^{k+1},\sigma^{k}) \leq E(R^{k},\sigma^{k}) \nonumber
\end{equation}
\end{proof}

\section{Robust Sliced-Wasserstein Distance and Numerical Algorithms}
\label{sec:RSWdistance}
To reduce the computation cost of the minimization problem discussed in the previous section, we first introduce a new distance called Robust Sliced-Wasserstein distance between two point clouds. The computation efficiency of the new distance is based on the existence of analytical solution of 1D optimal transport problem. We further develop an alternative method to solve the proposed nonconvex optimization problem. Moreover, we can also show that the proposed algorithm produces a decreasing sequence to approach a local minimizer. Inspired by the idea of the robust Sliced-Wasserstein distance, we also propose an empirical but more efficient algorithm for point clouds registration, which can be viewed as an approximation algorithm for model \eqref{eqn:Discrete_RWdistance}.

\subsection{1D optimal transportation}
\label{subsec:1DOT}
Let's first briefly discuss the solution of 1D optimal transport problem, which is our major motivation to propose a new distance introduced in section~\ref{subsec:RSWdistance}. Let $\sP(\RR)$ be the set of Borel probability measures on $\RR$. Given $\mu,\nu\in\sP(\RR)$, we write $F_{\mu}, F_{\nu}:\RR \rightarrow [0,~1]$ as the cumulative distribution functions of $\mu, \nu$ respectively. Define $F_{\mu}^{-1}:[0,~1]\rightarrow \RR$ as $F_{\mu}^{-1}(t) = \inf \{x\in\RR~|~ F_{\mu}(x) > t\}$, then the solution of 1D optimal transport problem is described by the following theorem \cite{Cedric2003topics}.
\begin{theorem} Consider the following 1D optimal transport problem
\begin{equation}
\sigma^{\mu,\nu} = \arg \min_{\sigma\in\ADM(\mu,\nu)} \int_{\RR\times\RR} (x - y)^2 \d \sigma(x,y) \nonumber
\end{equation}
then the following statements hold,
\begin{enumerate}
\item $\mbox{Supp}(\sigma^{\mu,\nu}) \subset \Big\{(x,y)\in\RR^2~|~ F_{\mu}(x^-) \leq F_{\nu}(y), \mbox{ and } F_{\nu}(y^-) \leq F_{\mu}(x) \Big\}$
\item For any $B\in\sB(\RR^2), \sigma^{\mu,\nu}(B) = \mathcal{L}\left(\Big\{t\in[0,~1]~|~(F_{\mu}^{-1}(t),F_{\nu}^{-1}(t)) \in B \Big\} \right)$, where $\mathcal{L}$ stands for the Lebesgue measure on $[0,~1]$.
\item The optimal value is given by $\displaystyle \int_0^1 (F_\mu^{-1}(s) - F_\nu^{-1}(s))^2 \d s =  \min_{\sigma\in\ADM(\mu,\nu)} \int_{\RR\times\RR} (x - y)^2 \d \sigma(x,y)  $
\end{enumerate}
\label{thm:1DOptimalTrans}
\end{theorem}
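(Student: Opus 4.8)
The plan is to extract all three assertions from one structural fact: since the cost $c(x,y)=(x-y)^2$ is \emph{submodular}, i.e. $c(x_1,y_1)+c(x_2,y_2)-c(x_1,y_2)-c(x_2,y_1) = -2(x_2-x_1)(y_2-y_1)\le 0$ whenever $x_1\le x_2$ and $y_1\le y_2$, every optimal plan must be concentrated on a comonotone set. First I would record that a minimizer $\sigma^{\mu,\nu}$ exists: $\ADM(\mu,\nu)$ is nonempty (it contains $\mu\otimes\nu$) and weakly compact, and $\sigma\mapsto\int c\,\d\sigma$ is weakly lower semicontinuous with $c\ge 0$, so the infimum is attained.

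The core step is the swapping argument yielding statement 1. Suppose, for contradiction, that $\sigma^{\mu,\nu}$ puts positive mass on the complement of the set in statement 1; unwinding the definition of $F^{-1}$ this produces two points $(x_1,y_1),(x_2,y_2)\in\mathrm{Supp}(\sigma^{\mu,\nu})$ with $x_1<x_2$ and $y_1>y_2$. Choosing small disjoint neighborhoods $U_i\ni x_i$ and $V_i\ni y_i$ so that the ``crossed'' cost strictly exceeds the ``straight'' one, I would disintegrate $\sigma^{\mu,\nu}$ over the product boxes $U_i\times V_j$ and build a competitor plan that reroutes an equal amount of mass so as to pair $U_1$ with $V_2$ and $U_2$ with $V_1$ while leaving both marginals unchanged; submodularity makes the competitor strictly cheaper, contradicting optimality. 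Hence $\mathrm{Supp}(\sigma^{\mu,\nu})\subset\{(x,y): F_\mu(x^-)\le F_\nu(y),\ F_\nu(y^-)\le F_\mu(x)\}$.

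For statements 2 and 3 I would exhibit the comonotone coupling explicitly and show it is the \emph{only} one. Set $T(t)=(F_\mu^{-1}(t),F_\nu^{-1}(t))$ and $\sigma_0 := T_\#\mathcal L_{[0,1]}$, where $\mathcal L$ is Lebesgue measure; the classical inverse-transform identity $(F_\mu^{-1})_\#\mathcal L_{[0,1]}=\mu$ (and likewise for $\nu$) shows $\sigma_0\in\ADM(\mu,\nu)$, while monotonicity of $F_\mu^{-1},F_\nu^{-1}$ gives that $\mathrm{Supp}(\sigma_0)$ lies in the set of statement 1. I would then prove uniqueness of a coupling supported on that set: for such a $\sigma$ the events $\{x\le a\}$ and $\{y\le b\}$ are nested up to a $\sigma$-null set, so $\sigma\big((-\infty,a]\times(-\infty,b]\big)=\min\{F_\mu(a),F_\nu(b)\}$ for all $a,b$; since lower-left rectangles generate $\sB(\RR^2)$ and a finite measure is determined by its values on them, $\sigma=\sigma_0$. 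Combined with statement 1 this gives $\sigma^{\mu,\nu}=\sigma_0$, which is statement 2, and statement 3 is then the change of variables $\int_{\RR\times\RR}(x-y)^2\,\d\sigma_0 = \int_0^1 (F_\mu^{-1}(t)-F_\nu^{-1}(t))^2\,\d t$.

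The step I expect to be the main obstacle is making the swap in the second paragraph fully rigorous when $\mu$ or $\nu$ have atoms: one cannot literally ``move a point'' and must work with the disintegration of $\sigma^{\mu,\nu}$ and genuinely positive-mass blocks, together with the bookkeeping around the one-sided convention $F_\mu^{-1}(t)=\inf\{x:F_\mu(x)>t\}$, which is precisely what makes the set in statement 1 the correct object and $(F_\mu^{-1})_\#\mathcal L_{[0,1]}=\mu$ exact. As a backup for the swapping argument I would keep the identity $\int(x-y)^2\,\d\sigma=\int x^2\,\d\mu+\int y^2\,\d\nu-2\int xy\,\d\sigma$ and invoke the Hardy--Littlewood rearrangement inequality to conclude that $\int xy\,\d\sigma$ is maximized exactly by the comonotone coupling; this trades the hands-on swap for a classical inequality but still needs the uniqueness discussion above to pin down statement 2.
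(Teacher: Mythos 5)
The paper does not prove this theorem at all: it is quoted verbatim from the cited reference \cite{Cedric2003topics} (Villani, \emph{Topics in Optimal Transportation}, Thm.~2.18), so there is no in-paper argument to compare against. Your proposal is essentially the standard proof from that source and is sound: existence by weak compactness of $\ADM(\mu,\nu)$ and lower semicontinuity; the swapping/cyclical-monotonicity argument with the submodularity identity $c(x_1,y_1)+c(x_2,y_2)-c(x_1,y_2)-c(x_2,y_1)=-2(x_2-x_1)(y_2-y_1)$ to exclude crossing pairs from the support; the Hoeffding--Fr\'echet identity $\sigma\bigl((-\infty,a]\times(-\infty,b]\bigr)=\min\{F_\mu(a),F_\nu(b)\}$ plus a $\pi$-system argument to identify the unique monotone coupling with $T_\#\mathcal{L}_{[0,1]}$, $T(t)=(F_\mu^{-1}(t),F_\nu^{-1}(t))$; and change of variables for statement~3. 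One phrasing to tighten: the passage from ``positive mass off the set in statement~1'' to ``a crossing pair in the support'' does not come from unwinding $F^{-1}$ but from the marginal conditions --- if $(x_0,y_0)\in\mathrm{Supp}(\sigma)$ with $F_\mu(x_0^-)>F_\nu(y_0)$, then $\sigma\bigl(\{x<x_0\}\times\{y>y_0\}\bigr)\geq F_\mu(x_0^-)-F_\nu(y_0)>0$, which supplies the second point of the crossing pair. Your flagged concerns (atoms in the swap, the one-sided convention in $F^{-1}$, implicit finiteness of second moments) are exactly the right ones and are all handled by the block-rerouting construction you describe; the Hardy--Littlewood backup is also a valid alternative for optimality, though it still needs the uniqueness step for statement~2.
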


The above theorem suggests a closed-form solution for discrete 1D optimal transport problems. Given two discrete sets $X = \{x_1,  x_2, \cdots , x_{\ell_X}\} \subset \RR $ and $Y = \{y_1 ,  y_2 \cdots , y_{\ell_Y}\} \subset \RR $. We consider two discrete probability measures $\displaystyle \mu^X = \sum_{i=1}^{\ell_X} \mu^X_i \delta_{x_i} $ and $\displaystyle\mu^Y = \sum_{j=1}^{\ell_Y} \mu^Y_j \delta_{y_j}$. The discrete 1D optimal transport can be formulated as:
\begin{equation}
\sigma^{X,Y} = \arg \min_{\sigma\in\ADM(\mu^X,\mu^Y)} \sum_{i,j}\sigma_{i,j} (x_i - y_j)^2
\end{equation}
whose solution can be obtained from Theorem \ref{thm:1DOptimalTrans} with the following closed form.
\begin{corollary}[Solution of Discrete 1D optimal transport]
Let $\pi_y,\pi_y$ be two permutations such that $x_{\pi_x(1)} < x_{\pi_x(2)} < \cdots < x_{\pi_x({\ell_X})}$ and $y_{\pi_y(1)} < y_{\pi_y(2)} < \cdots < y_{\pi_y({\ell_Y})}$. Let $\hat{x}_i = x_{\pi_x(i)}, \hat{\mu}^X_i  = \mu^X_{\pi_x(i)}, s_0 = 0, s_i =  \hat{\mu}^X_{1} + \cdots + \hat{\mu}^X_{i}, i = 1,\cdots, {\ell_X} $ and $\hat{y}_j = y_{\pi_y(j)}, \hat{\mu}^Y_j  = \mu^Y_{\pi_y(j)}, h_0 = 0, h_j =  \hat{\mu}^Y_{1} + \cdots + \hat{\mu}^Y_{j}, j = 1\cdots, {\ell_Y}$.  Define 

\begin{equation}
\hat{\sigma}_{i,j}  =  \left\{\begin{array}{cc}
0, & \mbox{ if } s_i \leq h_{j-1}, \mbox{ or } ~~h_j \leq s_{i-1} \\
\hat{\mu}^X_i, & \mbox{ if }   h_{j-1} \leq s_{i-1} < s_i \leq h_j\\
\hat{\mu}^Y_j, & \mbox{ if }   s_{j-1} \leq h_{j-1} < h_j \leq s_i\\
s_i - h_{j-1}, & \mbox{ if }   s_{i-1} \leq h_{j-1} < s_i \leq h_j\\
h_j - s_{i-1}, & \mbox{ if }   h_{j-1} \leq s_{i-1} < h_j \leq s_i\\
\end{array}\right.
\end{equation}
Then $\sigma^{X,Y}_{i,j} = \hat{\sigma}_{\pi_x(i),\pi_y(j)}$.
\label{cor:1DDiscreteOT}
\end{corollary}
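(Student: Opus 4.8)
The plan is to read the closed form off Theorem~\ref{thm:1DOptimalTrans} specialized to purely atomic marginals, and then to evaluate a single Lebesgue measure of an intersection of two intervals. First I would reduce to that theorem's setting: any coupling $\sigma$ with marginals $\mu^X=\sum_i\mu^X_i\delta_{x_i}$ and $\mu^Y=\sum_j\mu^Y_j\delta_{y_j}$ must be supported on the finite set $\{(x_i,y_j)\}$, since $\sum_i\sigma(\{x_i\}\times\RR)=\sum_i\mu^X_i=1=\sigma(\RR^2)$ and likewise in the second variable; hence $\sigma$ is determined by the numbers $\sigma_{i,j}:=\sigma(\{(x_i,y_j)\})$ and the discrete problem defining $\sigma^{X,Y}$ is literally the problem of Theorem~\ref{thm:1DOptimalTrans} with $\mu=\mu^X$, $\nu=\mu^Y$. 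Since the cost $\sum_{i,j}\sigma_{i,j}(x_i-y_j)^2$ and the set $\ADM(\mu^X,\mu^Y)$ are equivariant under relabeling the points, it further suffices to prove the formula for sorted data (i.e. $x_1<\cdots<x_{\ell_X}$, $y_1<\cdots<y_{\ell_Y}$, so $\hat x_i=x_i$, $\hat\mu^X_i=\mu^X_i$, and similarly for $Y$); reindexing the rows by $\pi_x$ and the columns by $\pi_y$ afterwards yields the stated relation $\sigma^{X,Y}_{i,j}=\hat\sigma_{\pi_x(i),\pi_y(j)}$.

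For the sorted data, $F_{\mu^X}$ is the right-continuous step function with $F_{\mu^X}(x)=s_i$ on $x_i\le x<x_{i+1}$ (with $x_{\ell_X+1}:=+\infty$, $s_0:=0$, $s_{\ell_X}=1$); from the convention $F_{\mu^X}^{-1}(t)=\inf\{x:F_{\mu^X}(x)>t\}$ one checks $F_{\mu^X}^{-1}(t)=x_i\iff t\in[s_{i-1},s_i)$, and similarly $F_{\mu^Y}^{-1}(t)=y_j\iff t\in[h_{j-1},h_j)$. Taking $B=\{(x_i,y_j)\}$ in Theorem~\ref{thm:1DOptimalTrans}(2) then gives
\[
\sigma^{X,Y}_{i,j}=\mathcal{L}\big(\{t\in[0,1]:F_{\mu^X}^{-1}(t)=x_i,\ F_{\mu^Y}^{-1}(t)=y_j\}\big)=\mathcal{L}\big([s_{i-1},s_i)\cap[h_{j-1},h_j)\big).
\]

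The last step is to evaluate $\mathcal{L}\big([s_{i-1},s_i)\cap[h_{j-1},h_j)\big)$ by a short case split on the relative order of the endpoints $s_{i-1}<s_i$, $h_{j-1}<h_j$: the intersection is empty when $s_i\le h_{j-1}$ or $h_j\le s_{i-1}$; it equals $[s_{i-1},s_i)$, of measure $s_i-s_{i-1}=\hat\mu^X_i$, when $h_{j-1}\le s_{i-1}<s_i\le h_j$; it equals $[h_{j-1},h_j)$, of measure $\hat\mu^Y_j$, when $s_{i-1}\le h_{j-1}<h_j\le s_i$; and in the two staggered configurations it equals $[h_{j-1},s_i)$ of measure $s_i-h_{j-1}$, resp. $[s_{i-1},h_j)$ of measure $h_j-s_{i-1}$. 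This reproduces the displayed formula for $\hat\sigma_{i,j}$ (the ``$s_{j-1}$'' in its third line being a misprint for ``$s_{i-1}$''), and combining with the reindexing from the reduction step completes the proof.

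I do not expect any genuinely hard step; the work is all careful bookkeeping. The two points that need attention are: making the convention $F^{-1}(t)=\inf\{x:F(x)>t\}$ act consistently at the jump points, so that the level sets $\{t:F_{\mu^X}^{-1}(t)=x_i\}=[s_{i-1},s_i)$ tile $[0,1]$ exactly; and checking that the five listed cases are exhaustive and mutually consistent on their boundary overlaps (e.g. when $s_{i-1}=h_{j-1}$ or $s_i=h_j$, two of the formulas apply but return the same value). The fact that the resulting $\sigma^{X,Y}$ has the prescribed marginals needs no separate verification, as it is part of the conclusion of Theorem~\ref{thm:1DOptimalTrans}.
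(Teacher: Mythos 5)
Your proof is correct and follows essentially the same route as the paper's: apply part (2) of Theorem~\ref{thm:1DOptimalTrans} to the singleton $B=\{(\hat x_i,\hat y_j)\}$, identify the level sets of the generalized inverses as consecutive intervals partitioning $[0,1]$, and evaluate the Lebesgue measure of their intersection by the five-case split. Your additional bookkeeping (reduction to sorted data, the $[s_{i-1},s_i)$ versus $(s_{i-1},s_i]$ endpoint convention, and the $s_{j-1}$ misprint) is all sound and does not change the substance.
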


\begin{proof} 
Let $\displaystyle \hat{\sigma} = \arg \min_{\sigma\in\ADM(\hat{\mu}^x,\hat{\mu}^y)} \sum_{i,j}\sigma_{i,j} (\hat{x}_i - \hat{y}_j)^2$. According to Theorem \ref{thm:1DOptimalTrans}, we have
\begin{eqnarray*}
\hat{\sigma}_{i,j} &= &\mathcal{L}\left(\Big\{t\in[0,~1]~|~(F_{\hat{\mu}^X}^{-1}(t),F_{\hat{\mu}^Y}^{-1}(t)) = (\hat{x}_i,\hat{y}_j)  \Big\} \right) \\
& = &  \mathcal{L}\left(\Big\{t\in[0,~1]~|~ F_{\hat{\mu}^X}^{-1}(t) = \hat{x}_i \Big\} \bigcap \Big\{t\in [0,~1]~|~ F_{\hat{\mu}^Y}^{-1}(t)) = \hat{y}_j  \Big\} \right) \\
& = & \mathcal{L} \Big( (s_{i-1},~ s_{i}] \cap (h_{j-1},~ h_{j}] \Big) \\
& = & \left\{\begin{array}{cc}
0, & \mbox{ if } s_i \leq h_{j-1}, \mbox{ or } ~~h_j \leq s_i \\
\hat{\mu}^x_i, & \mbox{ if }   h_{j-1} \leq s_{i-1} < s_i \leq h_j\\
\hat{\mu}^y_j, & \mbox{ if }   s_{j-1} \leq h_{j-1} < h_j \leq s_i\\
s_i - h_{j-1}, & \mbox{ if }   s_{i-1} \leq h_{j-1} < s_i \leq h_j\\
h_j - s_{i-1}, & \mbox{ if }   h_{j-1} \leq s_{i-1} < h_j \leq s_i\\
\end{array}\right.
\end{eqnarray*}
\end{proof}
Particularly, if we set $\ell_X = \ell_Y = N$ and $\mu^X_i = \mu^Y_i = 1/N$, then $\hat{\sigma}_{i,j} = \frac{1}{N} \delta_{i,j}$, which is a well-known result for 1D discrete optimal transport between two sets with the same number of points and uniform mass distribution. As the result indicated from Corollary \ref{cor:1DDiscreteOT}, the computation complexity of the 1D discrete optimal transport problem relies on the cost of sorting algorithms, which can be as fast as $O(N\log N)$. The efficient algorithm for 1D optimal transport problem inspires us to propose the following {\em robust sliced-Wasserstein distance} (RSWD), which can be viewed as a generalization of {\em slice-Wasserstein distance} proposed in~\cite{Rabin2012wasserstein}, where distance is only defined for two sets with the same number of points and no rigid transformation flexibility is included.

\subsection{Robust Sliced-Wasserstein distance}
\label{subsec:RSWdistance}
Let $\P, \Q\subset \RR^n$ be two complete and separable metrizable topological spaces. Given a unit vector  $\theta\in S^{n-1} $, where $S^{n-1}$ denotes the unit sphere in $\RR^n$,  and an orthonormal matrix $R\in O(n)$, we define projection maps 
\begin{eqnarray}
\label{eq:proj}
\begin{array}{cc}
\pi^{\theta,R} : \P \rightarrow \RR   \\
\hspace{1.2cm} p \mapsto p R \theta^T
\end{array} \qquad \& \qquad
\begin{array}{cc}
\pi^{\theta} : \Q \rightarrow \RR   \\
\hspace{1cm} q \mapsto q \theta^T
\end{array}
\end{eqnarray}
For a given $\mu^\P\in\sP(\P)$, an induced measure $\pi^{\theta,R}_{\#}\mu^\P$ on $\RR$ is defined by $\pi^{\theta,R}_{\#}\mu^\P (B) = \mu^\P((\pi^{\theta,R})^{-1}(B)), \forall B\subset \sB(\RR)$. Similarly, we define induced measure $\pi^{\theta}_{\#}\mu^\Q$ on $\RR$ for a given measure $\mu^{\Q}$ on $Y$. In addition, we define 
\begin{equation}
\ADM(\pi^{\theta,R}_{\#}\mu^\P,\pi^{\theta}_{\#}\mu^\Q) = \{\sigma \in\sP(\RR\times\RR)~|~ \sigma(B\times\RR) = \pi^{\theta,R}_{\#}\mu^\P(B), ~\sigma(\RR\times B) = \pi^{\theta}_{\#}\mu^\Q(B), \forall B\in\sB(\RR) \}
\end{equation}
By adapting the Sliced-Wasserstein distance proposed in~\cite{Rabin2012wasserstein}, we define the following robust sliced-Wasserstein distance (RSWD). 
\begin{equation}
\label{eqn:RSW}
\RSWD\Big((\P,\mu^{\P}),(\Q,\mu^{\Q})\Big)^2 = \min_{R\in O(n)} \int_{S^{n-1}} \min_{\sigma \in\ADM(\pi^{\theta,R}_{\#}\mu^\P,\pi^{\theta}_{\#}\mu^\Q)}  \int_{\RR\times\RR} \|x - y \|_2^2 \d \sigma(x,y) ~\d \theta 
\end{equation}
For each fixed $R$, there exits an optimal transport plan  $\sigma^{\theta}\in\ADM(\pi^{\theta,R}_{\#}\mu^\P,\pi^{\theta}_{\#}\mu^\Q)$  for each direction $\theta \in S^{n-1}$. Since $O(n)$ is compact, there exists a minimizer $R$ that achieves 
$\RSWD\Big((\P,\mu^{\P}),(\Q,\mu^{\Q})\Big)$.
Similar to the discretized robust Wasserstein distance discussed in section \ref{sec:ManifoldRegistration},  let $\{p_1,\cdots,p_{\ell_P}\}$ and $\{q_1,\cdots,q_{\ell_Q}\}$ be the discrete sampling of $\P$ and $\Q$ respectively.  Again we write $P \in\RR^{\ell_\P\times n}$ and $Q \in\RR^{\ell_\Q\times n}$ for their matrices notation. Also denote $\mu_{\theta, R}^P$ and $\mu_{\theta}^Q$ to be the measures on $\RR$ induced by $\pi^{\theta, R}:~p_i \mapsto p_iR\theta^T$ and $\pi^{\theta} :~ q_j \mapsto q_j\theta^T$ respectively. Then the discrete version of RSWD  can be formulated as follows.
\begin{eqnarray}
\label{eqn:Discrete_RSW}
\RSWD\Big((P,\mu^{P}),(Q,\mu^Q)\Big)^2  & = & \min_{R\in O(n)} \int_{S^{n-1}} \min_{\sigma \in\ADM(\mu_{\theta, R}^P, \mu_{\theta}^Q)} \sum_{i,j}\sigma_{ij}\| p_i R \theta^T - q_j \theta^T \|^2 ~\d \theta  \nonumber
\\
& = &\min_{R\in O(n)} \int_{S^{n-1}} \sum_{i,j}\sigma_{ij}^{\theta,R}\| p_i R \theta^T - q_j \theta^T \|^2 ~\d \theta
\end{eqnarray}
where $\sigma^{\theta, R}\in \ADM(\mu_{\theta, R}^P, \mu_{\theta}^Q)$ denotes the optimizer for the optimal transport for the two 1D point sets $ \{p_iR\theta^T\}_{i=1}^{\ell_P}$ and $ \{q_j \theta^T\}_{j=1}^{\ell_Q}$ constructed as in Corollary \ref{cor:1DDiscreteOT} for each direction $\theta$.
Hence distance between two high dimensional objects are measured by averaging Wasserstein distances from all 1D projections. This distance provides us a useful tool for registration, comparison and classification of point clouds. 
Meanwhile, it has a great advantage in computation efficiency since each 1D problem can be solved by sorting in $O(Nlog(N))$ operations discussed in section \ref{subsec:1DOT}. In addition, the following theorem justifies the proposed RSWD as a distance.

\begin{theorem}The following statements hold for $\RSWD(\cdot,\cdot)$.
\begin{enumerate}
\item  If $(P,\mu^P)\thicksim (P',\mu^{P'})$, then $\RSWD\Big((P,\mu^P),(Q,\mu^Q)\Big) = \RSWD\Big((P',\mu^{P'}),(Q,\mu^{Q})\Big) $.
\item $\RSWD\Big((P,\mu^P),(Q,\mu^Q)\Big) \geq 0$, and $\RSWD\Big((P,\mu^P),(Q,\mu^Q)\Big) = 0 \quad \Longleftrightarrow \quad (P,\mu^P) \thicksim (Q,\mu^Q)$.
\item $\RSWD\Big((P,\mu^P),(Q,\mu^Q)\Big) =\RSWD\Big((Q,\mu^Q),(P,\mu^P)\Big)$.
\item For any $(S,\mu^S) \in \mathfrak{M}_n$, $\RSWD\Big((P,\mu^P),(Q,\mu^Q)\Big) \leq \RSWD\Big((P,\mu^P),(S,\mu^S)\Big)+\RSWD\Big((S,\mu^S),(Q,\mu^Q)\Big)$, 
\end{enumerate}
Therefore, $\RSWD(\cdot,\cdot)$ defines a distance on the space $ \mathfrak{M}_{n}/\thicksim$. 
\label{thm:Discrete_RSW}
\end{theorem}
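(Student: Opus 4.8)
The plan is to verify the four listed properties in order, in close analogy with the proof of Theorem~\ref{thm:Discrete_RW}, the only structural change being that the transport cost in a fixed direction $\theta\in S^{n-1}$ is a one-dimensional optimal transport cost; I abbreviate $\W(\alpha,\beta)^2=\min_{\sigma\in\ADM(\alpha,\beta)}\int_{\RR\times\RR}(x-y)^2\,\d\sigma(x,y)$ for $\alpha,\beta\in\sP(\RR)$, and recall from \eqref{eqn:Discrete_RSW} that $\mu^P_{\theta,R}=\sum_i\mu^P_i\,\delta_{p_iR\theta^T}$, $\mu^Q_\theta=\sum_j\mu^Q_j\,\delta_{q_j\theta^T}$, and that the outer minimum over the compact group $O(n)$ is attained. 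Two elementary facts will be used throughout: $\W$ is itself a metric on $\sP(\RR)$ (so it is symmetric and obeys the triangle inequality), and the spherical Lebesgue measure $\d\theta$ on $S^{n-1}$ is invariant under the right action $\theta\mapsto\theta R$, $R\in O(n)$.

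\textbf{Properties 1 and 3.} For Property~1, if $(P,\mu^P)\thicksim(P',\mu^{P'})$ with $p_iR_0=p'_{\pi(i)}$ and $\mu^P_i=\mu^{P'}_{\pi(i)}$ for some permutation $\pi$ and $R_0\in O(n)$, then reindexing the defining sum by $j=\pi(i)$ gives $\mu^{P'}_{\theta,R}=\sum_i\mu^P_i\,\delta_{p_iR_0R\theta^T}=\mu^P_{\theta,R_0R}$, so substituting $\hat R=R_0R$ (which ranges over all of $O(n)$) in \eqref{eqn:Discrete_RSW} shows the two values of $\RSWD$ coincide. For Property~3, fix $R$ in $\RSWD\big((Q,\mu^Q),(P,\mu^P)\big)^2$: by symmetry of the $1$D cost, $\W(\mu^Q_{\theta,R},\mu^P_\theta)=\W(\mu^P_\theta,\mu^Q_{\theta,R})$ is the $1$D cost between the projection of $\mu^P$ onto $\theta$ and that of $\mu^Q$ onto $\theta R^T$; the substitution $\phi=\theta R^T$ (legitimate by rotation invariance of $\d\theta$) rewrites the $\theta$-integral as $\int_{S^{n-1}}\W(\mu^P_{\phi,R^{-1}},\mu^Q_\phi)^2\,\d\phi$, and taking $\min$ over $R\in O(n)$, equivalently over $R^{-1}\in O(n)$, returns $\RSWD\big((P,\mu^P),(Q,\mu^Q)\big)^2$.

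\textbf{Property 2.} Nonnegativity is immediate, and $(P,\mu^P)\thicksim(Q,\mu^Q)\Rightarrow\RSWD=0$ follows from Property~1 (or directly: the $R$ witnessing $\thicksim$ makes every $1$D pair of measures coincide). For the converse, let $R^\ast$ attain the outer minimum, so $\int_{S^{n-1}}\W(\mu^P_{\theta,R^\ast},\mu^Q_\theta)^2\,\d\theta=0$ and hence $\mu^P_{\theta,R^\ast}=\mu^Q_\theta$ for a.e. $\theta$; since characteristic functions are continuous this upgrades to all $\theta$, i.e. the probability measures $\sum_i\mu^P_i\,\delta_{p_iR^\ast}$ and $\sum_j\mu^Q_j\,\delta_{q_j}$ on $\RR^n$ have the same $1$D projection in every direction, so they are equal by the Cram\'er--Wold device. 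Matching atoms---which are distinct because $R^\ast$ is invertible and the $p_i$, resp. the $q_j$, are distinct in $\mathfrak{M}_n$---forces $\ell_P=\ell_Q$ together with a permutation $\pi$ such that $p_iR^\ast=q_{\pi(i)}$ and $\mu^P_i=\mu^Q_{\pi(i)}$, i.e. $(P,\mu^P)\thicksim(Q,\mu^Q)$.

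\textbf{Property 4 (the main obstacle).} Given $(S,\mu^S)\in\mathfrak{M}_n$, let $R^{PS}$ and $R^{SQ}$ attain $\RSWD\big((P,\mu^P),(S,\mu^S)\big)$ and $\RSWD\big((S,\mu^S),(Q,\mu^Q)\big)$, and set $R^{PSQ}=R^{PS}R^{SQ}$. The crux is that for each $\theta\in S^{n-1}$, writing $\theta'=\theta(R^{SQ})^T$, one has $\mu^P_{\theta,R^{PSQ}}=\mu^P_{\theta',R^{PS}}$ (both are the projection of $\mu^P$ onto the direction $\theta(R^{SQ})^T(R^{PS})^T$) and $\mu^S_{\theta'}=\mu^S_{\theta,R^{SQ}}$, so that the common middle measure $\mu^S_{\theta'}$ makes the $1$D triangle inequality applicable:
$$\W\big(\mu^P_{\theta,R^{PSQ}},\mu^Q_\theta\big)\ \le\ \W\big(\mu^P_{\theta',R^{PS}},\mu^S_{\theta'}\big)+\W\big(\mu^S_{\theta,R^{SQ}},\mu^Q_\theta\big).$$
Squaring, integrating over $\theta\in S^{n-1}$, using that $R^{PSQ}$ is a feasible (not necessarily optimal) rotation for the pair $(P,Q)$, and then applying Minkowski's inequality in $L^2(S^{n-1},\d\theta)$ yields
$$\RSWD\big((P,\mu^P),(Q,\mu^Q)\big)\ \le\ \Big(\int_{S^{n-1}}\!\W(\mu^P_{\theta',R^{PS}},\mu^S_{\theta'})^2\,\d\theta\Big)^{1/2}+\Big(\int_{S^{n-1}}\!\W(\mu^S_{\theta,R^{SQ}},\mu^Q_\theta)^2\,\d\theta\Big)^{1/2};$$
substituting $\theta'=\theta(R^{SQ})^T$ in the first integral (rotation invariance of $\d\theta$) and invoking optimality of $R^{PS}$ identifies it with $\RSWD\big((P,\mu^P),(S,\mu^S)\big)^2$, while the second is $\RSWD\big((S,\mu^S),(Q,\mu^Q)\big)^2$ by definition together with optimality of $R^{SQ}$. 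Finally, Properties~1 and 3 together show $\RSWD$ is well defined on $\mathfrak{M}_n/\thicksim$, so the four properties give a metric there. I expect Property~4 to be the only real difficulty: one must pick $R^{PSQ}=R^{PS}R^{SQ}$ so that the projection directions compose correctly, recognize $\mu^S_{\theta(R^{SQ})^T}$ as the single middle measure shared by both bounding terms so that the $1$D triangle inequality genuinely applies, and then track the change of variables $\theta'=\theta(R^{SQ})^T$ carefully enough that the two resulting integrals are \emph{exactly} the squared $\RSWD$'s of the two sub-pairs.
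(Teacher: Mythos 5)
Your proposal is correct, and its skeleton matches the paper's: Properties 1 and 3 are handled by the same changes of variables, and the concluding well-definedness remark is identical. You diverge in two places. For the implication $\RSWD\big((P,\mu^P),(Q,\mu^Q)\big)=0\Rightarrow(P,\mu^P)\thicksim(Q,\mu^Q)$, the paper argues by contradiction: if some $q_{j_0}$ is missed by $\{p_iR^{PQ}\}_i$, continuity produces an open subset of $S^{n-1}$ of positive measure on which the projected cost is bounded below, forcing a positive integral. You instead conclude $\mu^P_{\theta,R^\ast}=\mu^Q_\theta$ for every $\theta$ and invoke the Cram\'er--Wold device to equate the two discrete measures on $\RR^n$ before matching atoms; both routes work, yours is shorter but imports a classical theorem where the paper stays elementary. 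For the triangle inequality, the paper explicitly glues the two optimal one-dimensional plans into a three-index coupling $\sigma^{\theta}_{i,j,k}$ (as in Theorem~\ref{thm:Discrete_RW}) and then applies Cauchy--Schwarz/Minkowski, whereas you treat the one-dimensional Wasserstein cost as a metric on $\sP(\RR)$, apply its triangle inequality pointwise in $\theta$, and finish with Minkowski in $L^2(S^{n-1})$. More importantly, you make explicit the one point the paper's displayed computation glosses over: the two bounding terms share a common middle measure only after the substitution $\theta'=\theta(R^{SQ})^T$, so the plan coupling $P R^{PS}$ to $S$ must be taken optimal for the rotated direction $\theta'$ rather than for $\theta$, and the first integral is identified with $\RSWD\big((P,\mu^P),(S,\mu^S)\big)^2$ only after the corresponding rotation-invariant change of variables on $S^{n-1}$. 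Read literally, the paper's final equality in that chain uses the plan optimal at $\theta$, for which the needed inequality would point the wrong way; your version supplies the missing bookkeeping and is the more careful of the two.
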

\begin{proof}

1.  If $(P,\mu^P)\thicksim (P',\mu^{P'})$, then there exists a permutation  $\pi$
and an orthonormal matrix $R_0\in O(n)$ such that $p_i R_0 =  p'_{\pi(i)},  ~~\mu^P_{i} = \mu^{P'}_{\pi(i)}, i = 1,\cdots,\ell_P$. Then 
\begin{eqnarray}
\displaystyle \RSWD\Big((P',\mu^{P'}),(Q,\mu^Q)\Big)^2 &= & \min_{R\in O(n)} \int_{S^{n-1}} \min_{\sigma\in \ADM(\mu_{\theta, R}^P, \mu_{\theta}^Q)} \sum_{i,j} \sigma_{i,j} ~\| p_{\pi^{-1}(i)} R_0R \theta^T  -q_j \theta^T\|^2_2 ~\d \theta  \nonumber \\
&\ndtstile{\hat{R} = R_0 R }{\hat{\sigma}_{i,j} = \sigma_{\pi(i),j} }&  \min_{\hat{R}\in O(n)} \int_{S^{n-1}}  \min_{\hat{\sigma}\in \ADM(\mu_{\theta, R}^P, \mu_{\theta}^Q)} \sum_{i,j} \hat{\sigma}_{i,j} ~\| p_i \hat{R} \theta^T -q_j \theta^T\|^2_2~ \d \theta   \nonumber \\ 
& = & \RSWD\Big((P,\mu^{P}),(Q,\mu^Q)\Big)^2 \nonumber
\end{eqnarray}

2. It is clear to see that $\RSWD(P,Q) \geq 0$ and $P\thicksim Q$ implies $\RSWD(P,Q) = 0$. Next, we show $\RSWD(P,Q) = 0 \Longrightarrow (P, \mu^P) \thicksim (Q,\mu^Q)$. Let $R^{PQ}\in O(n)$ be a rigid transformation that attains $\RSWD\Big((P,\mu^{P}),(Q,\mu^Q)\Big)=0$. It is clear that $\{p_i R^{PQ}\}_{i=1}^{\ell_P} =   \{ q_j\}_{j=1}^{\ell_Q} $ implies existence of a permutation matrix $\Pi$ satisfying $R^{PQ} P = \Pi Q$, which also yields $\mu_{P} = \Pi \mu_{Q}$. Thus, it suffices to show $\{p_i R^{PQ}\}_{i=1}^{\ell_P} =   \{ q_j\}_{j=1}^{\ell_Q} $ which can be proved by contradiction as follows.  

Let's assume $\{p_i R^{PQ}\}_{i=1}^{\ell_P} \neq  \{ q_j\}_{j=1}^{\ell_Q}$. 
Without loss of generality, we assume $q_{j_0} \notin \{ p_iR^{PQ}\}_{i=1}^{\ell_P}$. Thus, there is a direction $\theta \in S^{n-1}$ such that $\| p_{i}  R^{PQ} \theta^T -q_{j_0} \theta^T \|^2_2 > 0 , \forall i=1, 2, \ldots, \ell_P$. By the continuity of $\| p_{i}  R^{PQ} \theta^T -q_{j_0} \theta^T \|^2_2$ with respect to $\theta$, there is an open set $\Omega \subset  S^{n-1}$ and has positive Lebesgue measure $\mu(\Omega)>c_1>0$ such that $\| p_{i}  R^{PQ} \theta^T -q_{j_0} \theta^T \|^2_2 > c_2>0, \forall \theta \in \Omega, i=1, 2, \ldots, \ell_P$.  Let $\sigma^{\theta}\in \ADM(\mu_{\theta, R}^P, \mu_{\theta}^Q)$ denote the optimizer of $\displaystyle \min_{\sigma \in\ADM(\mu_{\theta, R}^P, \mu_{\theta}^Q)} \sum_{i,j}\sigma_{ij}\| p_i R \theta^T - q_j \theta^T \|^2 $ for each direction $\theta$. Since $\displaystyle \sum_{i} \sigma_{i,j}^{\theta}=\mu_{\theta}^Q(q_j\theta^T)>0$, we have
\begin{eqnarray}
\RSWD\Big((P,\mu^P),(Q,\mu^Q)\Big) &= & \int_{S^{n-1}}\sum_{i,j} \sigma_{i,j}^{\theta} ~\| p_i R^{PQ}\theta^T  -q_j\theta^T \|^2_2~ \d \theta
\nonumber \\
& \ge & \int_{\Omega}   \sum_{i} \sigma_{i,j_0}^{\theta} ~\| p_i R^{PQ}\theta^T  -q_{j_0} \theta^T\|^2_2~ \d \theta
\nonumber \\
& > & c_1c_2\mu_{\theta}^Q(q_{j_0}\theta^T) > 0
\end{eqnarray}
This is in contradiction to $\RSWD\Big((P,\mu^P),(Q,\mu^Q)\Big) = 0$.

%By the definition of $\RSWD$, we have $\displaystyle \min_{\sigma\in \ADM(\mu^P,\mu^Q)} \sum_{i,j} \sigma_{i,j} ~\| p_{i}  R^{PQ} \theta^T -q_j \theta^T \|^2_2 = 0$ for any $\theta\in S^{n-1}$. Since $\{ R^{PQ}p_i\}_{i=1}^{\ell_P}$ are distinct points and $ \{q_i\}_{i=1}^{\ell_Q}$ are distinct points, there exists $\theta_0$ such that 
%$ \{R^{PQ}p_i\theta_0^T\}_{i=1}^{\ell_P}$ are distinct points and $ \{q_i \theta_0^T\}_{i=1}^{\ell_Q}$ are distinct points. Thus we have $\ell_P = \ell_Q$ as $\displaystyle \min_{\sigma\in \ADM(\mu^P,\mu^Q)} \sum_{i,j} \sigma_{i,j} ~\| p_{i}  R^{PQ} \theta_0^T -q_j \theta_0^T \|^2_2 = 0$. If we assume $\{ R^{PQ}p_i\}_{i=1}^{\ell_P} \neq  \{ q_j\}_{j=1}^{\ell_Q}$, there must be a direction $\theta_1$ such that $\{ R^{PQ}p_i\theta_1^T\}_{i=1}^{\ell_P} \neq  \{ q_j \theta_1^T\}_{j=1}^{\ell_Q}$ which  contradicts with $\displaystyle \min_{\sigma\in \ADM(\mu^P,\mu^Q)} \sum_{i,j} \sigma_{i,j} ~\| p_{i}  R^{PQ} \theta_1^T -q_j \theta_1^T \|^2_2 = 0$. This yields that there exists a permutation matrix $\Pi$ such that $R^{PQ} P = \Pi Q$, which also yields that $\mu_{P} = \Pi \mu_{Q}$.
%
%

3. Using change of variables, we have:
\begin{eqnarray}
\displaystyle\RSWD\Big((P,\mu^{P}),(Q,\mu^Q)\Big)^2 &=& \min_{R\in O(n)} \int_{S^{n-1}} \min_{\sigma\in \ADM(\mu_{\theta, R}^P, \mu_{\theta}^Q)} \sum_{i,j} \sigma_{i,j} ~\| p_{i}  R \theta^T -q_j \theta^T \|^2_2 ~\d\theta \nonumber \\
&\ndtstile{\hat{R} = R^{-1}}{\hat{\sigma} = \sigma^T, \hat{\theta}^T = \theta^T R }& \min_{R\in O(n)} \int_{S^{n-1}} \min_{\hat{\sigma}\in \ADM(\mu_{\hat{\theta}, \hat{R}}^Q, \mu_{\theta}^P)} \sum_{i,j} \hat{\sigma}_{i,j} ~\| q_{i}  \hat{R} \hat{\theta}^T  - p_j \hat{\theta}^T\|^2_2 ~\d \hat{\theta}  \nonumber \\
& = & \RSWD\Big((Q,\mu^Q),(P,\mu^{P})\Big)^2
\end{eqnarray}

4. Given $(P,\mu^P), (Q,\mu^{Q}), (S,\mu^{S}) \in\mathfrak{M}_n$, we denote $R^{PS}$ and $R^{SQ}$ as optimizers of $\RSWD\Big((P,\mu^{P}),(S,\mu^S)\Big)$ and $\RSWD\Big((S,\mu^{S}),(Q,\mu^Q)\Big)$ respectively. 
For each $\theta$, we construct $\sigma^{\theta}_{i,j,k}$ from the two optimal 1D transport plans $\sigma^{\theta}_{i,j}$ for $ \{p_iR^{PS}\theta^T\}_{i=1}^{\ell_P},  \{s_j \theta^T\}_{j=1}^{\ell_S}$ and $\sigma^{\theta}_{j,k}$ for $ \{s_jR^{SQ}\theta^T\}_{j=1}^{\ell_S},  \{q_k \theta^T\}_{k=1}^{\ell_Q}$  
the same way as used in the proof of theorem \ref{thm:Discrete_RW}. We have
\begin{eqnarray}
\displaystyle \RSWD\Big((P,\mu^{P}),(Q,\mu^Q)\Big)  & \leq&  \left( \int_{ S^{n-1}} \sum_{i,j,k} \sigma^{\theta}_{i,j,k} ~\| p_{i}  R^{PS} R^{SQ} \theta^T -  q_k \theta^T\|^2_2  ~ \d \theta \right)^{1/2} \nonumber \\
& \leq & \left( \int_{S^{n-1}} \sum_{i,j,k}  \sigma^{\theta}_{i,j,k} ~\Big( \| p_{i}  R^{PS}R^{SQ} \theta^T  - s_j R^{SQ} \theta^T \|_2 + \| s_{j}  R^{SQ}\theta^T - q_k \theta^T  \|_2 \Big)^2  ~ \d \theta\right)^{1/2}   \nonumber \\
&\leq & \left(\int_{S^{n-1}} \sum_{i,j,k} \sigma^{\theta}_{i,j,k} ~ \| p_{i}  R^{PS}R^{SQ} \theta^T - s_j R^{SQ} \theta^T \|_2^2\d \theta \right)^{1/2}  + \left( \int_{S^{n-1}}\sum_{i,j,k} \sigma^{\theta}_{i,j,k} \| s_{j}  R^{SQ} \theta^T - q_k \theta^T \|_2 \Big)^2 \d \theta\right)^{1/2}  \nonumber \\
& = & \left( \int_{S^{n-1}} \sum_{i,j} \sigma^{\theta}_{i,j} ~ \| p_{i}  R^{PS}  \theta^T- s_j \theta^T \|_2^2 \d \theta \right)^{1/2}  + \left( \int_{S^{n-1}}\sum_{j,k} \sigma^{\theta}_{j,k} \| s_{j}  R^{SQ} \theta^T- q_k \theta^T \|_2 \Big)^2  \d \theta\right)^{1/2}  \nonumber \\
& = & \RSWD\Big((P,\mu^{P}),(S,\mu^S)\Big) + \RSWD\Big((S,\mu^{S}),(Q,\mu^Q)\Big)
\end{eqnarray}
\end{proof}

\begin{remark}
\begin{enumerate}
\item $\RSWD(\cdot,\cdot)$ can be easily generalized to ``rotation" + ``translation", which can also be used for more general registration problems such as those considered in texture missing and image registration. 

\item In particular, if we freeze $R = I_n$ and set $\ell_P = \ell_Q = N, \mu^P = \mu^Q =1/N$, the $\RSWD$ distance reduces to the sliced-Wasserstein distance $\displaystyle \text{SWD}(P,Q)^2 = \int_{\theta\in\mathbb{R}^n, \|\theta\|=1} \min_{permutation ~\pi} \sum_{i=1}^{N} (  p_i\theta^T  -  q_{\pi(i)}\theta^T )^2 \d \theta $ proposed in~\cite{Rabin2012wasserstein}, where the authors only consider to measure two sets with the same number of points. The proposed $\RSWD$ distance can be viewed as a generalization of slice-Wasserstein distance. It also has the built in flexibility to handle ambiguities due to rigid transformation.

\end{enumerate}
\end{remark}

\subsection{Numerical algorithms for solving the Robust Sliced-Wasserstein distance}
To solve the non-convex optimization problem \eqref{eqn:Discrete_RSW} for the Robust Sliced-Wasserstein distance, we propose to alternatively and iteratively optimize with respect to $R$ and $\sigma$. The main advantage of this approximation is that $\sigma_{\theta}$ at each step can be efficiently constructed by essentially a sorting algorithm as shown in Corollary \ref{cor:1DDiscreteOT} for a given direction $\theta$, which is much more efficient than solving $\eqref{eqn:RW_sigma}$ by linear programming methods. Here is the proposed iterative algorithm for  \eqref{eqn:Discrete_RSW}.

\begin{algorithm2e}\caption{Iterative method for computing Robust Sliced-Wasserstein distance}
\label{alg:IteraRSW}
%\SetKwInOut{Input}{input}\SetKwInOut{Output}{output}
%\SetKwComment{Comment}{}{}
%\KwIn{$P, Q$}
%\KwOut{$R, \sigma,\Pi_{\sigma} $}

Initialize $R^0 , \sigma^0(\theta) $, \\
\While{``not converge"}{
$\displaystyle R^{k} = \min_{R\in O(n)} \int_{S^{n-1}} \sum_{i,j}\sigma^{k-1}_{i,j}(\theta)\Big( p_i R \theta^T - q_j \theta^T \Big)^2 \d \theta$.\\
For each  $\theta\in S^{n-1}$, solve $\displaystyle\sigma^{k}(\theta) =  \min_{\sigma \in\ADM(\mu_{\theta, R}^P, \mu_{\theta}^Q)} \sum_{i,j}\sigma_{ij}\Big( p_i R^{k} \theta^T - q_j \theta^T \Big)^2$ by Corollary \ref{cor:1DDiscreteOT}.\\
} 
\end{algorithm2e}

Moreover, the following theorem guarantees that the proposed algorithm creates a monotone sequence to approach a local minimizer of \eqref{eqn:Discrete_RSW}.

\begin{theorem}
Let $\displaystyle E(R)  = \int_{\theta\in\mathbb{R}^n, \|\theta\|=1} \min_{\sigma \in\ADM(\mu_{\theta, R}^P, \mu_{\theta}^Q)}\sum_{i,j}\sigma_{ij}\Big( p_i R \theta^T - q_j \theta^T \Big)^2 \d \theta$ and $\{ R^k\}_{k=1}^{\infty}$ be a sequence generated by algorithm~\ref{alg:IteraRSW}, then $E(R^{k+1}) \leq E(R^{k})$.
\end{theorem}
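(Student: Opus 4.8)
The plan is to carry over, essentially verbatim, the two-inequality ``sandwich'' used to prove monotonicity for Algorithm~\ref{alg:RW}, with the single transport plan replaced by a measurable family of $1$D plans indexed by the direction $\theta$. First I would introduce the joint functional
\begin{equation}
\tilde E\big(R,\sigma(\cdot)\big) \;=\; \int_{S^{n-1}} \sum_{i,j}\sigma_{ij}(\theta)\,\big(p_i R\theta^T - q_j\theta^T\big)^2 \,\d\theta ,
\end{equation}
defined for $R\in O(n)$ and for measurable families $\theta\mapsto\sigma(\theta)$ with $\sigma(\theta)\in\ADM(\mu^P,\mu^Q)$. Observe that this feasible set is the \emph{same} polytope for every $R$: the marginal constraints of the $1$D transport at direction $\theta$ are precisely $\sigma(\theta)\vec{1} = \mu^P$ and $\sigma(\theta)^T\vec{1} = \mu^Q$ once the projected points $\{p_iR\theta^T\}_i$ are labelled by $i$, and this holds for a.e.\ $\theta$ (only on a Lebesgue-null set of directions do two projected points coincide). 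With this convention $E(R)$ in the statement equals $\min_{\sigma(\cdot)}\tilde E\big(R,\sigma(\cdot)\big)$, because the inner minimization over $\sigma$ and the integral over $S^{n-1}$ decouple across directions, and a pointwise-optimal \emph{measurable} selector exists: Corollary~\ref{cor:1DDiscreteOT} gives it explicitly through the sorting permutations of $\{p_iR\theta^T\}_i$ and $\{q_j\theta^T\}_j$, which are piecewise constant in $\theta$.

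Next I would read off the two update steps of Algorithm~\ref{alg:IteraRSW} in this language: $R^{k}$ minimizes the continuous map $R\mapsto\tilde E\big(R,\sigma^{k-1}(\cdot)\big)$ over the compact set $O(n)$ (so a minimizer exists), and $\sigma^{k}(\cdot)$ is, for each $\theta$, the optimal $1$D plan for $R^{k}$, i.e.\ exactly the pointwise-optimal selector for $R^{k}$; hence $E(R^{k}) = \tilde E\big(R^{k},\sigma^{k}(\cdot)\big)$. Then the three inequalities chain just as before. From the $R$-step at iteration $k+1$,
\begin{equation}
\tilde E\big(R^{k+1},\sigma^{k}(\cdot)\big) \;\le\; \tilde E\big(R^{k},\sigma^{k}(\cdot)\big) \;=\; E(R^{k}),
\end{equation}
and since $\sigma^{k}(\cdot)$ is still an admissible family for the functional at $R^{k+1}$ (same polytope),
\begin{equation}
E(R^{k+1}) \;=\; \min_{\sigma(\cdot)}\tilde E\big(R^{k+1},\sigma(\cdot)\big) \;\le\; \tilde E\big(R^{k+1},\sigma^{k}(\cdot)\big) .
\end{equation}
Combining the two displays gives $E(R^{k+1})\le E(R^{k})$.

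The routine parts — expanding the squared norm, the use of Theorem~\ref{thm:1DOptimalTrans} and the explicit sorting formula of Corollary~\ref{cor:1DDiscreteOT} — are already available, so I would not belabour them. The step I expect to require the most care, and which I would state precisely, is the reduction $E(R)=\min_{\sigma(\cdot)}\tilde E\big(R,\sigma(\cdot)\big)$ together with the assertion that the old family $\sigma^{k}(\cdot)$ remains a legitimate competitor after $R$ is updated to $R^{k+1}$. Both hinge on two facts: the $1$D marginal constraints involve only the fixed masses $\mu^P,\mu^Q$ and not $R$, so the feasible polytope is $R$-independent once points are relabelled; and the plan produced by Corollary~\ref{cor:1DDiscreteOT} depends measurably (indeed piecewise continuously) on $\theta$, so that $\tilde E\big(R,\sigma^{k}(\cdot)\big)$ and $E(R)$ are well defined and the exchange of minimization and integration is valid. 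Everything else is a verbatim repetition of the sandwich argument for Algorithm~\ref{alg:RW}.
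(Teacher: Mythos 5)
Your proof is correct and follows essentially the same route as the paper's: the identical three-step sandwich $E(R^{k+1}) \le \tilde E\big(R^{k+1},\sigma^{k}(\cdot)\big) \le \tilde E\big(R^{k},\sigma^{k}(\cdot)\big) = E(R^{k})$, using pointwise optimality of $\sigma^k(\theta)$ for $R^k$ and optimality of $R^{k+1}$ for the frozen family $\sigma^k(\cdot)$. The additional care you take in noting that the admissible polytope is $R$-independent and that the optimal selector of Corollary~\ref{cor:1DDiscreteOT} is measurable in $\theta$ only makes explicit details the paper leaves implicit.
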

\begin{proof} 
From the construction of $R^{k+1}$, it is clear that 
\begin{equation}
\int_{S^{n-1}}  \sum_{i,j}\sigma^{k}_{ij}(\theta)\Big( p_i R^{k+1} \theta^T - q_j \theta^T \Big)^2 \d \theta \leq \int_{S^{n-1}}  \sum_{i,j}\sigma^{k}_{ij}(\theta)\Big( p_i R^{k} \theta^T - q_j \theta^T \Big)^2  \d \theta
\end{equation}
Since  $\displaystyle\sigma^{k}(\theta) = \arg \min_{\sigma \in\ADM(\mu_{\theta, R}^P, \mu_{\theta}^Q)}\sum_{i,j}\sigma_{ij}\Big( p_i R^k \theta^T - q_j \theta^T \Big)^2$, it is true that
\begin{equation}
\int_{S^{n-1}} \sum_{i,j}\sigma^{k}_{ij}(\theta)\Big( p_i R^{k} \theta^T - q_j \theta^T \Big)^2 \d \theta = \int_{S^{n-1}}  \min_{\sigma \in\ADM(\mu_{\theta, R}^P, \mu_{\theta}^Q)}\sum_{i,j}\sigma_{ij}\Big( p_i R^k \theta^T - q_j \theta^T \Big)^2 \d \theta = E(R^k)
\end{equation}
On the other hand, since $\displaystyle\min_{\sigma \in\ADM(\mu_{\theta, R}^P, \mu_{\theta}^Q)}\sum_{i,j}\sigma_{ij}\Big( p_i R^{k+1} \theta^T - q_j \theta^T \Big)^2 \leq
\sum_{i,j}\sigma^{k}_{ij}(\theta)\Big( p_i R^{k+1} \theta^T - q_j \theta^T \Big)^2$, we have
\begin{eqnarray}
E(R^{k+1}) &=& \int_{S^{n-1}} \min_{\sigma \in\ADM(\mu_{\theta, R}^P, \mu_{\theta}^Q)}\sum_{i,j}\sigma_{ij}\Big( p_i R^{k+1} \theta^T - q_j \theta^T \Big)^2 \d \theta \nonumber \\
&\leq& \int_{S^{n-1}} \sum_{i,j}\sigma^{k}_{ij}(\theta)\Big( p_i R^{k+1} \theta^T - q_j \theta^T \Big)^2 \d \theta  \nonumber \\
&\leq& \int_{S^{n-1}}  \sum_{i,j}\sigma^{k}_{ij}(\theta)\Big( p_i R^{k} \theta^T - q_j \theta^T \Big)^2 \d \theta \nonumber \\
&=& E(R^k)  \nonumber
\end{eqnarray}
\end{proof}

To efficiently approximate the integration and solve the problem \eqref{eqn:Discrete_RSW} with orthogonality constraint, we propose the following curvilinear search algorithm. We denote $\Theta = \{\theta_1,\theta_2,\cdots, \theta_L\} \subset S^{n-1}$ as $L$ randomly chosen unit directions in $\mathbb{R}^n$. For each $\theta_l\in\Theta$, it is easy to compute the optimizer of $\displaystyle\sigma(\theta_l) = \arg\min_{\sigma \in\ADM(\mu_{\theta, R}^P, \mu_{\theta}^Q)} \sum_{i,j}\sigma_{ij}\Big( p_i R  \theta^T_l - q_j \theta^T_l \Big)^2$ by the sorting method provided in Corollary \ref{cor:1DDiscreteOT}.  The crucial step is to solve the following optimization problem with orthogonality constraint.
\begin{equation}\label{eqn:RSW_R}
\displaystyle \arg\min_{R\in O(n)} E_{\Theta}(R) = \sum_{l=1}^L \sum_{i,j}\sigma_{ij}(\theta_l)\Big( p_i R  \theta^T_l - q_j \theta^T_l \Big)^2
\end{equation}
Based on Cayley transform, a curvilinear search method with
Barzilai-Borwein (BB) step size is introduced in~\cite{Wen:2013feasible}. Here, we adapt this method to solve the above problem.  For convenience, we use the following notations:
\begin{eqnarray}
H &=& \displaystyle\nabla_{R} E_{\Theta}(R)  = 2 \sum_{l=1}^L  \sum_{i,j}\sigma_{ij}(\theta_l) p_i^T ( p_i R   -  q_j)  \theta_l\theta_l^T   \nonumber \\
A &=& H R^T - R H^T  \nonumber \\
R &\leftarrow& Y(\tau) =  \left( I + \frac{\tau}{2} A\right)^{-1} \left( 1 - \frac{\tau}{2} A\right)R
\end{eqnarray}

To make the maximal use of the computed $\|H\|^2$ and $R^TH$ at each iteration $s$, we  determine a step size $\tau_s$ that makes significant descent while still guarantees the convergence of the overall iterations. We also apply nonmonotone curvilinear %\footnote{The search path is a curve rather than a straight line.} 
search with an initial step size determined by the Barzilai-Borwein formulas, which were developed originally for $\mathbb{R}^n$ in \cite{BarzilaiBorwein1988,ZhangHager2004}. At iteration $k$, the step size is computed as
 \begin{equation}
 \label{eq:tau}
 \tau_{s,1} = \frac{
 \Tr(D_{s-1}^TD_{s-1}^T)}{
 |\Tr(D_{s-1}^TW_{s-1} )| } \quad \mbox{ or } \quad  \tau_{s,2} = \frac{|
 \Tr(D_{s-1}^TW_{s-1}) |}{\Tr(
 W_{s-1}^T W_{s-1})}, 
 \end{equation}
 where $D_{s-1}:= R_s-R_{s-1}$ and $W_{s-1}=A_sR_s -
 A_{s-1}R_{s-1} $.
 The final value for $\tau_k$ is a fraction (up to 1, inclusive) of $\tau_{s,1}$ or $\tau_{s,2}$ determined by the nonmonotone search in
 Lines 6 and 7 of Algorithm \ref{alg:SCS}, which enforce a  trend of descent in the objective value but do not require strict descent at each iteration. %At the first iteration where $R_{s-1}$ and $A_{s-1}$ are not available, one can set a unit initial step size. 
More details and the proof of convergence of this algorithm can be found in~\cite{Wen:2013feasible}.
 
 Assembling the above parts, we arrive at Algorithm \ref{alg:SCS}, in which
$\epsilon$ is a stopping parameter, and $\rho$, $\delta$, and $\xi$ are
curvilinear search parameters, which can be set to typical values as $10^{-4}$,
$0.1$ and $0.85$, respectively.

\begin{algorithm2e}\caption{Curvilinear search algorithm for \eqref{eqn:Discrete_RSW}}
\label{alg:SCS}
\SetKwInOut{Input}{input}\SetKwInOut{Output}{output}
\SetKwComment{Comment}{}{}
\KwIn{$P, Q$}
\KwOut{$R, \bar{\sigma}$}

Initialize $R^0, \sigma_l^0$ and randomly choose $L$ directions $\{\theta_1,\cdots,\theta_L\} \subset S^{n-1}$

\While{``not converge"}{
%\begin{enumerate}

 $s \gets 0$.\\
\While{$\|\nabla E_{\Theta}(R^{k+1,s})\|>\epsilon$}{
Compute $\tau_s \gets \tau_{s,1} \delta^h$ or $\tau_s \gets \tau_{s,2} \delta^h$, where
$h$ is the smallest nonnegative integer satisfying $E_{\Theta}(Y_s(\tau_s)) \leq C_s +
\rho\tau_s E_{\Theta}'(Y_s(0)) $.\\
  $R^{k+1,s+1} \leftarrow Y_s(\tau_s).$

  $Q_{s+1} \leftarrow \xi Q_s +1$ and
  $\displaystyle C_{s+1} \leftarrow \frac{\xi Q_{s}C_{s} + E_{\Theta}(R^{k+1,s+1})}{Q_{s+1}}$.

  $s\gets s+1$.
         }
Solve each $\displaystyle \sigma^{k+1}(\theta_l) = \min_{\sigma} \sum_{ij} \sigma_{ij} \Big( p_i R^{k} \theta_l^T  -  q_j \theta_l^T  \Big)^2$ by the sorting method given in Corollary \ref{cor:1DDiscreteOT}. \\
%\end{enumerate}
} 
$\displaystyle R = R^k, \bar{\sigma} = \frac{1}{L}\sum_{l=1}^L \sigma^{k}(\theta_l)$ \\
\end{algorithm2e}

\begin{remark}
\begin{enumerate}
\item The major difference between the model of minimizing RSWD \eqref{eqn:RSW} and the general non-convex model \eqref{eqn:Discrete_RWdistance} is that there is no explicit permutation involved in the RSWD. The correspondence or registration results from a post processing in the form of a distribution. In other words, 
suppose the above algorithm stops at $R^k\in O(n)$ and $\{\sigma^k(\theta_l)\}_{l=1}^L$, the average transport plan $\displaystyle\bar{\sigma} = \frac{1}{L} \sum_{l=1}^L \sigma^k(\theta_l) $ satisfies 
$\bar{\sigma} \vec{1}= \mu^P,  \bar{\sigma}^T\vec{1} = \mu^Q$ and $ \bar{\sigma} \geq = 0$, which indicates that $\bar{\sigma} \in\ADM(\mu^P,\mu^Q)$. We define $\Pi_{\bar{\sigma}}  = diag(\mu_{P_1}^{-1},\cdots,\mu_{P_{\ell_P}}^{-1}) \bar{\sigma}$ as a correspondence between $P$ and $Q$ in the distribution sense. 
\item As another advantage of the RSWD, the computation of $\sigma(\theta)$ along each projection is independent, which can be further sped up by using parallel computating. Thus, the algorithm has great potential to handle point clouds of large sizes. However, the realization of parallel computing is beyond the topic of this paper. We will discuss more along this direction in our future work. 
\end{enumerate}
\end{remark}

\subsection{An empirical point clouds registration algorithm}
Inspired by the explanation of the correspondence in the distribution sense, we propose the following empirical algorithm to approximate the 
general non-convex model problem \eqref{eqn:Discrete_RWdistance} proposed in Section \ref{sec:ManifoldRegistration}. 
We approximate $\sigma^k$ in the second step of algorithm~\ref{alg:RW}
by averaging the optimal permutation matrices in each direction. 
\begin{algorithm2e}\caption{An empirical algorithm for \eqref{eqn:Discrete_RWdistance}}
\label{alg:IteraSW}
\SetKwInOut{Input}{input}\SetKwInOut{Output}{output}
\SetKwComment{Comment}{}{}
\KwIn{$P, Q$}
\KwOut{$R, \sigma$}

Initialize $R^0, \sigma^0  $, and randomly choose $L$ directions $\{\theta_1,\cdots,\theta_L\} \subset S^{n-1}.$ \\
\While{``not converge"}{
$\displaystyle R^k = \arg\min_{R\in O(n)}  \sum_{i,j} \sigma^{k-1}_{i,j} ~\| p_i R -q_j \|^2_2 = UV^T$, where $\mbox{SVD}(P^T\sigma^{k-1} Q)   = UDV^T$.\\
$ \displaystyle \sigma^k = \frac{1}{L} \sum_{l=1}^L \sigma_{\theta_l}$, where each $\displaystyle\sigma_{\theta_l} = \arg\min_{\sigma\in\ADM(\mu_{\theta_l, R}^P, \mu_{\theta_l}^Q)} \sum_{ij} \sigma_{ij} \Big( p_i R^k \theta^T_l  - q_j \theta^T_l  \Big)^2$ solved by the sorting method given in Corollary \ref{cor:1DDiscreteOT}
}
$R = R^k, \sigma = \sigma^k$.
\end{algorithm2e}

The above algorithm has several advantages listed as follows:
\begin{enumerate}
\item $R^k$ at each step can be directly solved by SVD decomposition, which is more efficient than iteratively solving problem \eqref{eqn:RSW_R} in algorithm~\ref{alg:SCS}. Thus, we can expect this empirical algorithm will be more efficient than the registration algorithm using RSWD.
\item  The approximation of $\sigma^k$ at each step make the model \eqref{eqn:Discrete_RWdistance} computationally tractable. Similarly to what we discussed about algorithm~\ref{alg:SCS}, the second step in algorithm~\ref{alg:IteraSW}, which is the main computation cost,  is easily parallelizable. This property has great benefit for handling registration of point clouds of large sizes. 
\end{enumerate}

\subsection{Multi-scale registration}
As a great advantage of $\displaystyle \iota_{\M}^n:\M\rightarrow \mathbb{R}^n,   \iota_{\M}^n(u) = \left(\frac{\phi_1(u)}{\lambda_1^{d/4}},\cdots, \frac{\phi_n(u)}{\lambda_n^{d/4}}\right)$, LB eigenmap  $\iota_{\M}^n$ provides a natural multiscale characterization of the manifold $\M$ with global information. This observation leads to a simple and natural multi-scale registration algorithm which enjoys robustness, efficiency and accuracy. First, we choose a sequence of numbers of LB eigenfunctions to construct LB eigenmap for point clouds $\{u_i \in\M\}_{i=1}^{\ell_{\M}}$ and $ \{v_i \in\N\}_{i=1}^{\ell_{\N}}$. Efficient and robust registration on coarse scale can be obtained from LB eigenmap with the first few leading LB eigenvalues and corresponding eigenfunctions. Then registration at finer and finer scales can be achieved effectively by using LB eigenmap with more and more LB eigenvalues and corresponding eigenfunctions using available coarser scale registration as the initial guess. More precisely, let $3 = n_0 < \cdots <  n_j < \cdots <  n_J$ be a given sequence of numbers. We write $P^{n_j} = \Big\{ \iota_{\M}^{n_j}(u_i) \Big\}_{i=1}^{\ell_{\M}}$ and $Q^{n_j} = \Big\{ \iota_{\N}^{n_j}(v_i) \Big\}_{i=1}^{\ell_{\N}}$ as the LB eigenmap of these two point clouds induced from their leading $n_j$ LB eigenvalues and corresponding eigenfunctions. For convenience, we also write $(R, \sigma) = PCRegs((P,\mu^\M), (Q,\mu^\N))$ for an registration algorithm with input $P, Q$ and output $R, \sigma$. In particularly, we can choose $PCRegs(\cdot, \cdot)$ as either algorithm~\ref{alg:SCS} or algorithm~\ref{alg:IteraSW}. Then, we propose the following multi-scale registration algorithm.

\begin{algorithm2e}\caption{Multi-scale registration}
\label{alg:MultiScaledRegs}
Initialize $(R_0,\sigma_0) = PCRegs((P^{n_0},\mu^\M) , (Q^{n_0},\mu^\N) )$ \\
\For{j = 1:J}{
$\displaystyle (R_j, \sigma_j) = PCRegs((P^{n_j},\mu^\M) , (Q^{n_j} , \mu^\N))$ using $\sigma_{j-1}$ as an initial guess.\\
 }
 $\Pi^{PQ} = diag(\mu_{P_1}^{-1},\cdots,\mu_{P_{\ell_P}}^{-1}) \sigma_J$
\end{algorithm2e}

\section{Numerical results}
\label{sec:NumericalResults}
In this section, numerical tests are presented to illustrate the proposed methods for point clouds registration. First, we compare the computation efficiency of Algorithm~\ref{alg:SCS}, Algorithm~\ref{alg:IteraSW} and also indicate the capability of the proposed algorithm for fixing the ambiguities of LB eigen-system for point cloud registration. Second, we illustrate the effectiveness of the proposed methods for mapping point clouds with different density distribution. Finally, we demonstrate that the simple and natural multi-scale registration algorithm \ref{alg:MultiScaledRegs} can improve both efficiency and accuracy. All numerical experiments are implemented by MATLAB in a PC with a 16G RAM and a 2.7 GHz CPU.

\begin{figure}[h]
\begin{center}
\includegraphics[width=1\linewidth]{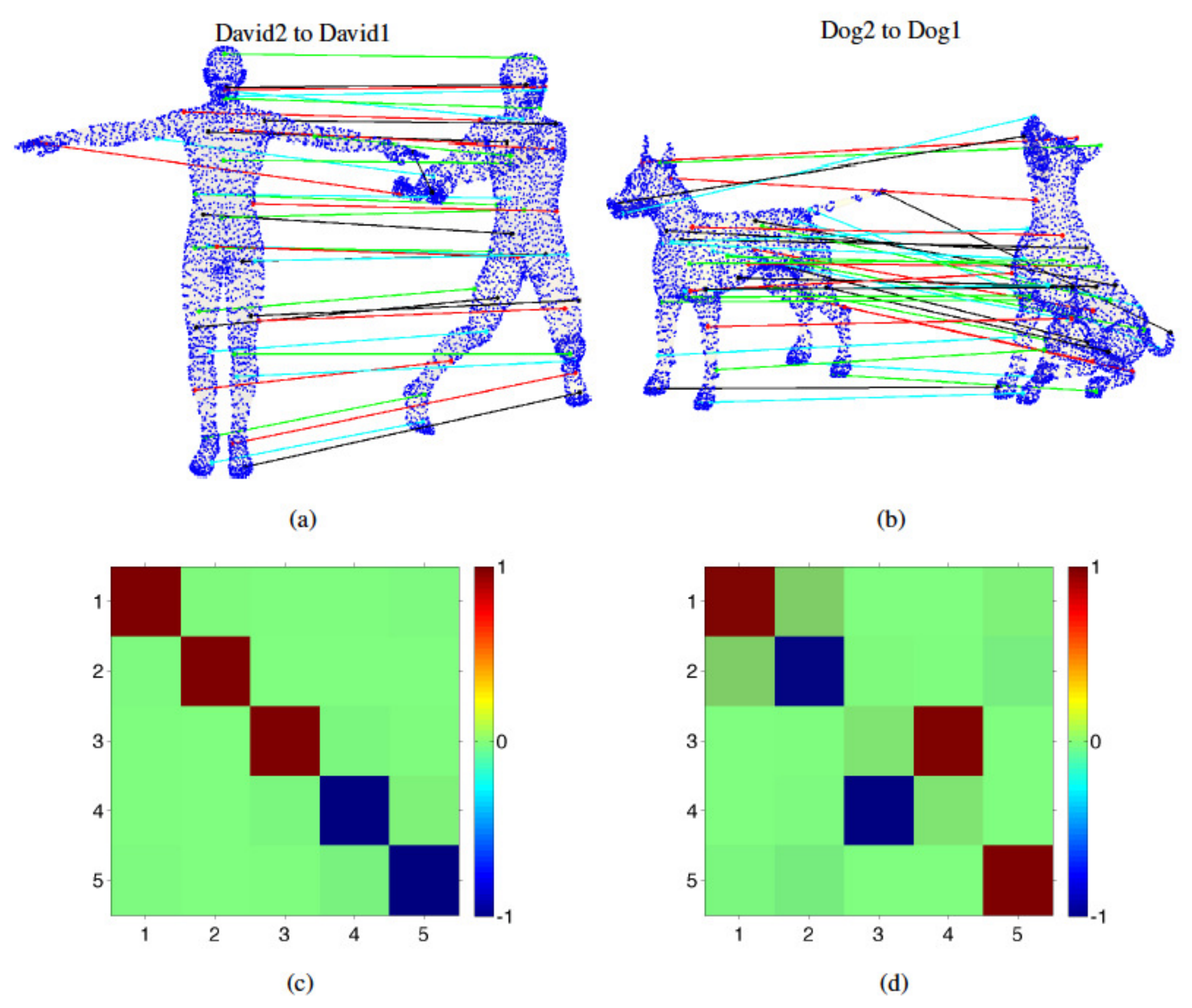}\\
\end{center}
%\begin{minipage}{0.495\linewidth}
%\centering David2 to David1
%\end{minipage}\hfill
%\begin{minipage}{0.495\linewidth}
%\centering Dog2 to Dog1
%\end{minipage}\hfill \\ 
%\begin{minipage}{0.495\linewidth}
%\includegraphics[width=1\linewidth]{PCReg_david_2to1_RSW_eig5.eps}\\
%\end{minipage}\hfill
%\begin{minipage}{0.495\linewidth}
%\includegraphics[width=1\linewidth]{PCReg_dog_2to1_RSW_eig5.eps}\\
%\end{minipage}\hfill\\
%\begin{minipage}{0.495\linewidth}
%\centering (a)
%\end{minipage}\hfill
%\begin{minipage}{0.495\linewidth}
%\centering (b)
%\end{minipage}\hfill \\ 
%\begin{minipage}{0.49\linewidth}
%\includegraphics[width=1\linewidth]{david1to2_RMatrix_5.png}\\
%\centering (c)
%\end{minipage}\hfill
%\begin{minipage}{0.49\linewidth}
%\includegraphics[width=1\linewidth]{dog1to2_RMatrix_5.png}\\
%\centering (d)
%\end{minipage}\hfill\\
\vspace{-0.5cm}
\caption{The first row: registration using the first 5 eigenvalues and corresponding eigenfunctions  by algorithm~\ref{alg:SCS} with $\mu^{P} = \mu^{Q} = 1/N$. The second row: the resulting $5\times 5$ rotation matrices for point clouds registration using the first 5 nontrivial LB eigenfunctions. }
\label{fig:PCReg_DavidDog}
\end{figure}

\begin{table}[h]
\begin{center}
%\resizebox{6in}{.48in}{
\begin{tabular}{|c||c||c|c|c|c|c|c|}
\hline
\multirow{3}{2cm}{Point clouds} & \multirow{3}{1.2cm}{Size (N)}&\multirow{3}{1.5cm}{methods } & \multicolumn{5}{|c|}{Time consumption (s)} \\
 \cline{3-8}
 &  &  & $n = 5$  & $n = 10$  &  $n = 20$  &  $n = 30$   & $n = 50$    \\
 \cline{3-8}
  &  &  & $L = 1000$  & $L = 1500$  &  $L = 2000$  &  $L = 3000$   & $L = 5000$    \\
 \hline
\multirow{2}{2cm}{David point clouds} & \multirow{2}{1.2cm}{3400} & Alg. ~\ref{alg:SCS} &   5.23   & 26.40  &   64.11 & 93.66 & 168.46  \\
 \cline{3-8}
&  & Alg. ~\ref{alg:IteraSW}   & 13.91   &  18.36  &  23.92  & 34.39    & 55.10 \\
 \hline
 \multirow{2}{2cm}{Dog point clouds} &   \multirow{2}{1.2cm}{3400} & Alg. ~\ref{alg:SCS} &   17.40   & 42.50  &   60.32 & 92.20 &  167.04  \\
 \cline{3-8}
& & Alg. ~\ref{alg:IteraSW}   & 13.89   & 18.18 &  22.85  &  32.25    & 53.89 \\
 \hline
\end{tabular}
%}
\end{center}
\caption{Comparison of computation for algorithm~\ref{alg:SCS} and ~\ref{alg:IteraSW}.}
\label{tab:PCReg_TimeComp}
\end{table}

\begin{figure}[h]
\centering
\includegraphics[width=1\linewidth]{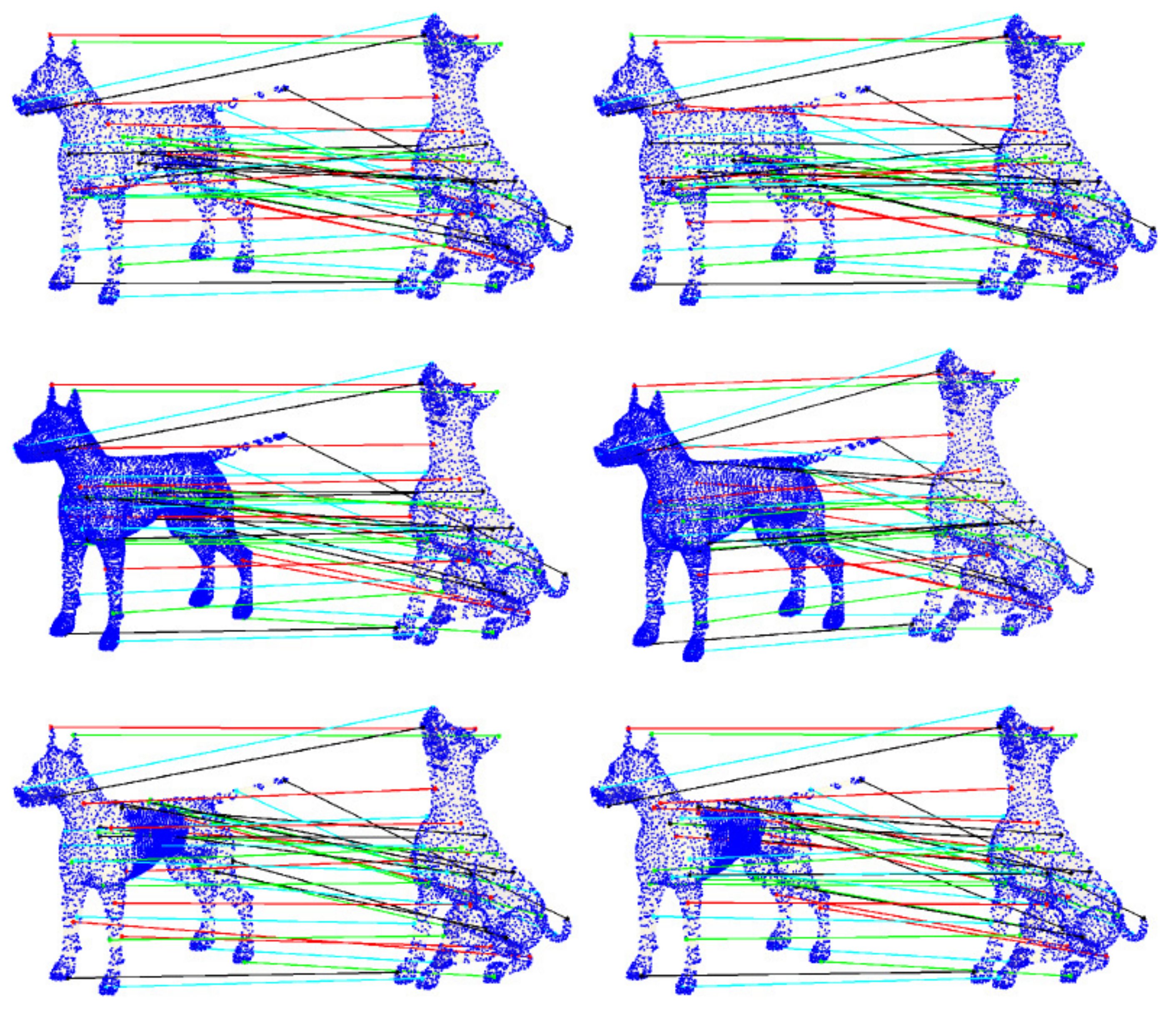}\\
%\begin{minipage}{0.495\linewidth}
%\includegraphics[width=1\linewidth]{PCReg_dog_2to1_RSW_eig10.eps}\\
%\end{minipage}\hfill
%\begin{minipage}{0.495\linewidth}
%\includegraphics[width=1\linewidth]{PCReg_dog_2to1_RW_eig10.eps}\\
%\end{minipage}\hfill
%\begin{minipage}{0.495\linewidth}
%\includegraphics[width=1\linewidth]{PCReg_dog_2tobi_1_RSW_eig10.eps}\\
%\end{minipage}\hfill
%\begin{minipage}{0.495\linewidth}
%\includegraphics[width=.9\linewidth]{PCReg_dog_2tobi_1_RW_eig10.eps}\\
%\end{minipage}\hfill
%\begin{minipage}{0.495\linewidth}
%\includegraphics[width=1\linewidth]{PCReg_dog_2to1_remesh5_RSW_eig10.eps}\\
%\end{minipage}\hfill
%\begin{minipage}{0.495\linewidth}
%\includegraphics[width=1\linewidth]{PCReg_dog_2to1_remesh5_RW_eig10.eps}\\
%\end{minipage}\hfill
%\vspace{-0.5cm}
\caption{Registration results between point clouds with different density distribution. Left column: registration results obtained by algorithm~\ref{alg:SCS} using the first 10 nontrivial LB eigenvalues and corresponding eigenfunctions. Right Column: registration results obtained by algorithm~\ref{alg:IteraSW} using the first 10 nontrivial LB eigenvalues and corresponding eigenfunctions.}
\label{fig:NonUniformDistributionPCRegistration}
\end{figure}

In our first example, we test the proposed algorithm~\ref{alg:SCS} and algorithm~\ref{alg:IteraSW} for registration of point clouds with the same number of points. Two point clouds sampled from two approximately isometric surfaces are given as input data. Note that the registration results obtained from both algorithms are correspondences in the distribution sense, which are represented as matrices whose row sum is 1.  We convert the correspondence matrix to pointwise correspondence only for visualization purpose by simply choosing the index of the maximal value for each row of the correspondence matrix. The first row of Figure \ref{fig:PCReg_DavidDog}  plots registration results using the first 5 nontrivial LB eigenvalues and corresponding eigenfunctions. The registration result is quite satisfactory visually. %We will provide more precise evaluation a little after.  
Meanwhile, as a by-product of the algorithms, we can obtain the optimal rigid transformation matrix for registering the two point clouds after LB eigenmap, which in fact shows the ambiguity of the LB eigenmap. As we illustrated in Figure~\ref{fig:LBeigenAmbiguity}, a rotation matrix for aligning LB embedding of two dog point clouds with the first 4 LB eigenfunctions should be close to 
$\left(\begin{array}{ccccc}
1\\
 & -1\\
 & & & 1\\
 & & -1 &
 \end{array}\right)
 $. In fact, this is consistent with our computation result illustrated in Figure~\ref{fig:PCReg_DavidDog} (d). Similarly, Figure~\ref{fig:PCReg_DavidDog} (c). reports the rotation matrix for two David point clouds registration. Moreover, we also list computation time of both algorithms in Table ~\ref{tab:PCReg_TimeComp}, which clearly shows the efficiency of our algorithms. 
 
 As an advantage of the proposed registration methods based on the optimal transport theory, our methods can also handle registration for data with different density distributions. In Figure~\ref{fig:NonUniformDistributionPCRegistration}, we report the second experiment  results for registration of point clouds with possible different density distribution using the proposed methods, where $\mu^{P},\mu^{Q}$ are chosen as the density distribution of the given data which can be approximated by computing the area of Voronoi cell of each point from its local mesh construction. Our results show that the proposed methods can handle registration problem for point clouds with different density distribution. 
 %we also would like to point out that computation of LB eigensystem might be effected by the density distribution which will further affect the registration results. While, the stability of computation of LB eigensytstem to the density of point clouds is beyond the topic of this paper. 
 In summary, our computation results suggest that the proposed registration methods successfully address the ambiguity introduced from the LB eigenmap, provide efficient non-rigid point cloud registration and has great potential to handle non-rigid registration between two general point clouds with possible different density distributions. 

In our third experiment, we demonstrate the capability of the proposed multi-scale registration algorithm \ref{alg:MultiScaledRegs}. A nice property of our muli-scale algorithm is that it is not necessary to obtain perfect registration results at coarse scale, which can significantly speed up the whole registration process with even 200 LB eigenfunctions used as the finest level. In order to have a better quantitative assessment of the registration quality especially at the fine scale,  rather than using typical eyeball norm or visual inspection, we propose a {\em connectively transformation test} as follows. 

Let $\P = \{p_1,\cdots,p_N\}$ and $\Q = \{q_1,\cdots,q_N\}$ be two set of points sampled from the source surface $\M$ and target surface $\N$. Let $f$ be a map from $\P$ to $\Q$ such that $f(p_i) = q_{\sigma(i)}$. We impose a triangle mesh connectivity structure $\T = \Big\{ T_s = [p_{s_1}~ p_{s_2}~ p_{s_3}] \Big\}_s$ on the source point cloud $\P$ such that $\{\P, \T\}$ forms a triangle mesh representation of $\M$. Naturally, the imposed connectivity can be transformed to the target point cloud using the obtained registration result. In other words, we consider $f(\T) = \Big\{ f(T_s) = [f(p_{s_1})~ f(p_{s_2})~ f(p_{s_3})] \Big\}_s$ as an induced connectively on $\Q$ from $f$. It is certainly true that $\{\Q, f(\T)\}$ is not necessary a good triangle mesh representation of $\N$ for an arbitrary $f$. On the other hand, if $f$ provides a good correspondence or registration, then $\{\Q, f(\T)\}$ can provide a reasonable triangle mesh representation of $\N$. Namely, one can observe a regular triangle mesh representation of $\N$ without having cross-over triangles. The more accurate registration we have, the  smoother surface we can reconstruct using $\{\Q, f(\T)\}$. 

In Figure~\ref{fig:PC_MultiRegs_SCS_dog} and Figure~\ref{fig:PC_MultiRegs_IteraSW_dog}, we show the reconstructions from the transformed connectivity obtained from the intermediate registration results in the multi-scale registration process using algorithm~\ref{alg:SCS} and algorithm~\ref{alg:IteraSW} respectively. In our example, we choose a sequence of numbers $\{3, 5, 10, 20, 30, 50, 80, 120, 150, 200 \}$ for LB eigenembedding. We only run 2 iterations using the proposed registration algorithms at each level. It is clearly seen that the registration results at the first few levels only capture coarse level information. However, more and more detailed information can be captured with more and more LB eigenfunctions. Thus, we can see finer and finer structure from surface reconstructions. Although we only run 2 iterations without exactly solving the registration problem at each level, we observe that the multi-scale registration process can gradually correct the registration error and finally lead to very accurate registration results. In Tabel~\ref{tab:MultiScaleReg_TimeComp}, we report the computation time using algorithm~\ref{alg:SCS} and algorithm~\ref{alg:IteraSW} for each level respectively.

\begin{figure}[h]
\centering
\includegraphics[width=1\linewidth]{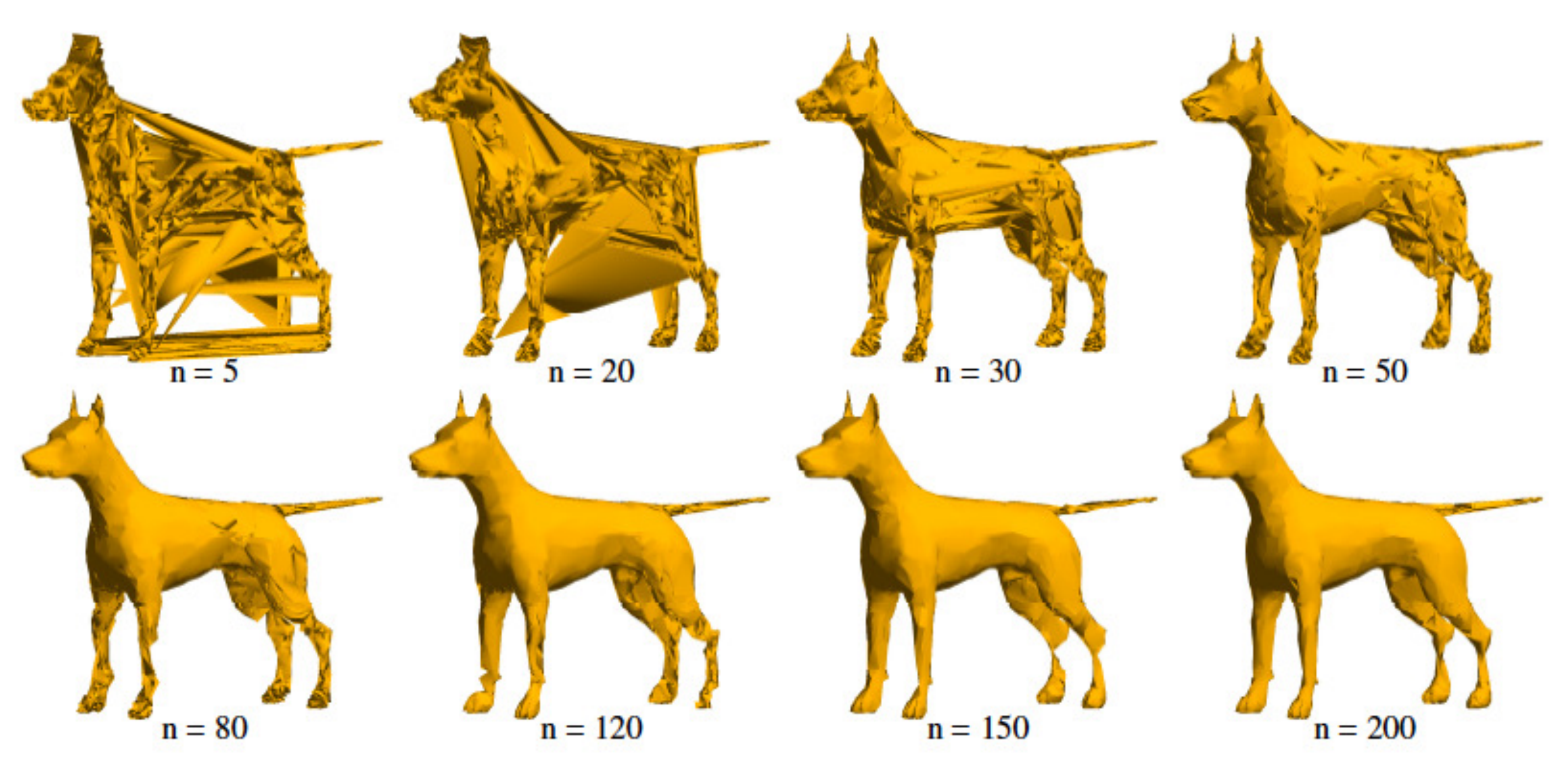}\\
\caption{Connectively transformation test of multi-scale registration using algorithm~\ref{alg:SCS}.}
\label{fig:PC_MultiRegs_SCS_dog}
\end{figure}

\begin{figure}[h]
\centering
\includegraphics[width=1\linewidth]{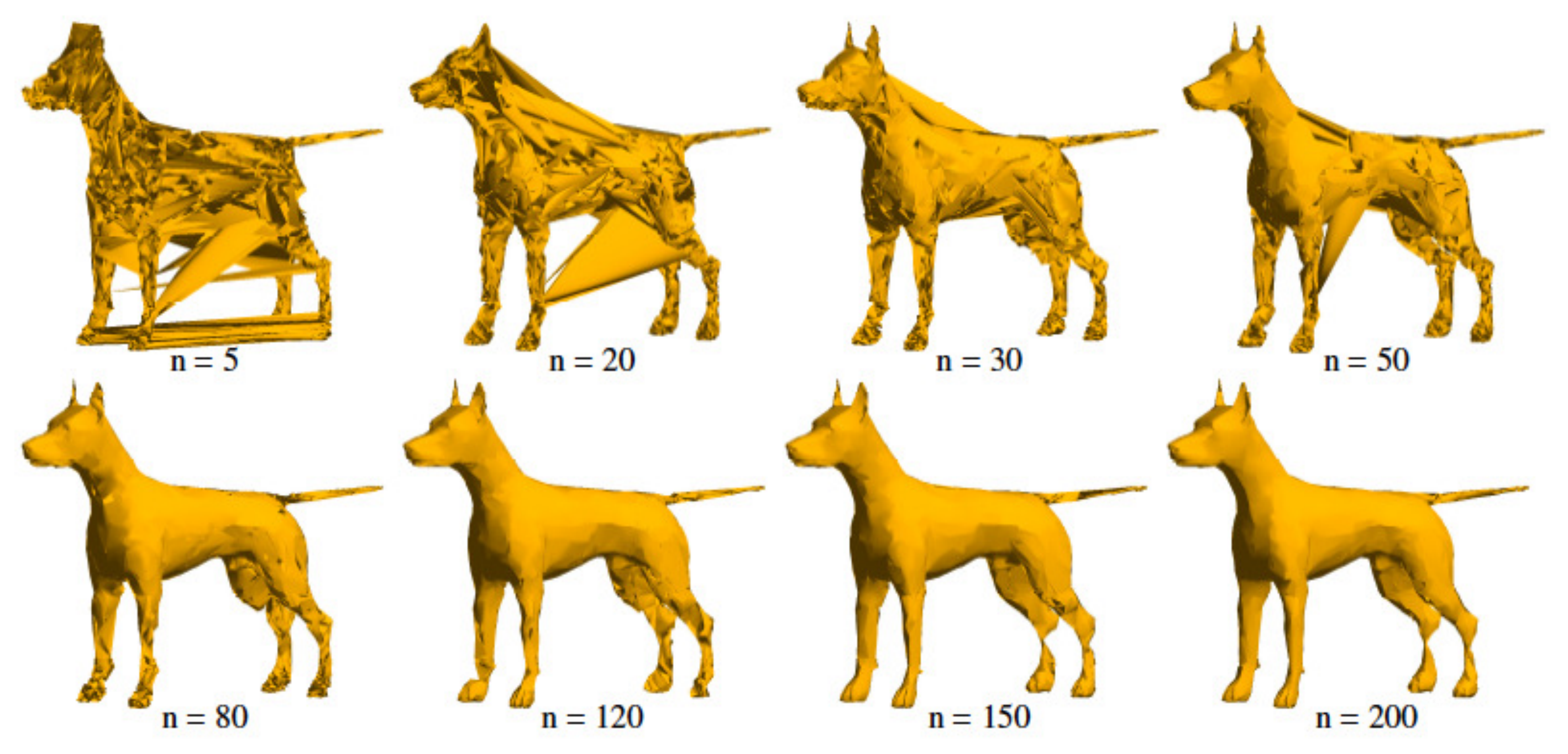}\\
\caption{Connectively transformation test of multi-scale registration using algorithm~\ref{alg:IteraSW}.}
\label{fig:PC_MultiRegs_IteraSW_dog}
\end{figure}

\begin{table}[h]
\begin{center}
\resizebox{6.5in}{.48in}{
\begin{tabular}{|c||c|c|c|c|c|c|c|c|c|c|}
\hline
%\multirow{2}{1.1cm}{$\#$ of modes} &  \multicolumn{4}{|c||}{Representation Error using CPWs} & \multicolumn{4}{|c|}{Representation Error using Fourier basis}   \\
 \cline{2-9} 
\multirow{3}{1.5cm}{methods } & \multicolumn{9}{|c|}{Time consumption (s)} \\
 \cline{2-10}
 &  $n_1 = 5$  & $n_2 = 10$  &  $n_3 = 20$  &  $n_4 = 30$   & $n_5 = 50$  & $n_6 = 80$  &   $n_7 = 120$  &   $n_8 = 150$ & $n_9 = 200$  \\
 \cline{2-10}
 & $L = 500$  & $L = 800$  &  $L = 1000$  &  $L = 1500$   & $L = 3000$   & $L = 6000$  &  $L = 10000$  &  $L = 15000$   & $L = 20000$  \\
 \hline
Alg. ~\ref{alg:SCS} &   0.61   & 1.21 &  1.50 & 2.26 &  4.75 &  9.58 & 16.50  &  26.23 & 38.42  \\
 \hline
Alg. ~\ref{alg:IteraSW}   & 0.45   &  .70  &  0.83  &  1.23   & 2.30 &  4.76 &  7.68  &  11.67  & 15.60 \\
 \hline
% Alg. ~\ref{alg:LB_ICP}   & 0.27  &  0.29  &  0.32  &  0.36   & 0.44 &   0.51  &  0.72  &  0.89  & 1.26\\
% \hline
\end{tabular}
}
\end{center}
\caption{Comparison of computation time using algorithm~\ref{alg:SCS} and algorithm~\ref{alg:IteraSW} for multi-scale registration.}
\label{tab:MultiScaleReg_TimeComp}
\end{table}

\section{Conclusion}
\label{sec:conclusion}
In this work a multi-scale non-rigid point cloud registration model and computational algorithms are proposed. The key ideas in our approach are:
\begin{enumerate}
\item
Use LB eigenmap, which is invariant under isometric transformation and scaling, to give an intrinsic multi-scale representation of the original point clouds in the embedding space. 
\item
Develop effective registration models and computational algorithms for the new point clouds after LB eigenmap using non-convex optimization that can remove ambiguities introduced by LB eigenmap. In particular, robust sliced-Wasserstein distance (RSWD)  is proposed and proved to be a well defined metric and computationally efficient for point clouds.
\item
Employ a natural multi-scale procedure that can achieve efficiency, accuracy and robustness. 
\end{enumerate}
Numerical tests demonstrate promising results for our methods. There are a few interesting projects based on the current work we will pursue in the future. We will investigate possibilities to incorporate local or other features in the LB eigenmap and the measure, for instance affine invariant Gaussian curvature proposed in \cite{Raviv:2014Affine}. We will also study shape classification by using machine learning techniques based on LB eigenmap and RSWD. Last but not least, we will explore point cloud registration and classification in high dimensional data analysis.

% \bibliographystyle{unsrt}
%\bibliography{PCRegistration}

\end{document}